\numberwithin{equation}{section}
\newcommand{\bg}[1]{{\boldsymbol{#1}}} 
\newcommand{\mc}[1]{{\mathcal{#1}}} 
\newcommand{\norm}[1]{{\left\| #1 \right\|}}
\newcommand{\p}[1]{{\left( #1 \right)}}
\newcommand{\set}[1]{{\left\{ #1 \right\}}}
\newcommand{\br}[1]{{\left[ #1 \right]}}
\newcommand{\abs}[1]{{\left| #1 \right|}}
\def\dt{{\Delta t}}
\def\dx{{\Delta x}}
\def\jph{{j+\frac{1}{2}}}
\def\jmh{{j-\frac{1}{2}}}
\def\eps{\varepsilon}
\theoremstyle{plain}                    
\newtheorem{thm}{Theorem}[section]
\newtheorem{lemma}[thm]{Lemma}
\newtheorem{rmk}[thm]{Remark}
\newcommand*\xbar[1]{%
  \hbox{%
    \vbox{%
      \hrule height 0.5pt 
      \kern0.4ex
      \hbox{%
        \kern-0.05em
        \ensuremath{#1}%
        \kern-0.00em
      }%
    }%
  }%
}
\title{ An Asymptotic-Preserving Scheme for Isentropic Flow in Pipe Networks}
\author{Michael Redle\thanks{Chair of
Applied and Computational Mathematics, RWTH Aachen University, 52062 Aachen, Germany;
{\tt redle@acom.rwth-aachen.de}} \ and Michael Herty,\thanks{Chair of Numrical Analysis, Institute for Applied Mathematics (IGPM), RWTH University, 52062 Aachen, Germany;
{\tt herty@igpm.rwth-aachen.de} 
} }
\date{\today}
\begin{document}
\maketitle
\begin{abstract}
    {
We consider the simulation of isentropic flow in pipelines and pipe networks. 
Standard operating conditions in pipe networks suggest an emphasis to simulate low Mach and high friction regimes 
-- however, the system is stiff in these regimes and conventional explicit approximation techniques prove quite costly and often impractical.
To combat these inefficiencies, we develop a novel asymptotic-preserving scheme that is uniformly consistent and stable for all Mach regimes. 
The proposed method for a single pipeline follows the flux splitting suggested in 
[Haack et al., Commun. Comput. Phys., 12 (2012), pp. 955--980],
in which the flux is separated into stiff and non-stiff portions then discretized in time using an implicit-explicit approach.
The non-stiff part is advanced in time by an explicit hyperbolic solver; we opt for the second-order central-upwind finite volume scheme. 
The stiff portion is advanced in time implicitly using an approach based on Rosenbrock-type Runge-Kutta methods, which ultimately reduces this implicit stage to a discretization of a linear elliptic equation. 

To extend to full pipe networks, the scheme on a single pipeline is paired with coupling conditions defined at pipe-to-pipe intersections to ensure a mathematically well-posed problem.
We show that the coupling conditions remain well-posed in the low Mach/high friction limit -- which, when used to define the ghost cells of each pipeline, results in a method that is accurate across these intersections in all regimes.
The proposed method is tested on several numerical examples and produces accurate, non-oscillatory results with run times independent of the Mach number.
}
\end{abstract}

\noindent\textbf{Keywords:} 
Isentropic Euler equations, 
pipe networks, 
non-conservative hyperbolic systems of nonlinear PDEs, 
asypmtotic-preserving scheme, 
all-speed scheme, 
central-upwind scheme,
flux splitting,
implicit-explicit approach, 
low Mach limit on pipe networks.
\bigskip

\noindent\textbf{AMS subject classification:} 35B40, 35L65, 35R02, 65M08, 76M12.

\section{Introduction}\label{sec1}

This paper focuses on the development of a novel numerical method for gas flow in pipelines and pipe networks. 
To describe the gas transport, we use the isothermal/isentropic Euler equations with a source to account for the friction along the pipe walls:

\begin{equation}
    \begin{aligned}
        & \rho_t + \p{\rho u}_x  = 0, \\
        &\p{\rho u}_t + \p{\rho u^2 +p}_x = -\frac{ \kappa}{2D} \rho u \abs{u},
    \end{aligned}
    \label{eq:euler}
\end{equation}
where $x$ is the (one-dimensional) spatial variable, $t$ denotes time, $\rho$ is the fluid density, $u$ denotes the fluid velocity, $p = \rho^\gamma$ is the pressure under the isentropic assumption, in which $\gamma$ is the ratio of specific heats, and $\kappa$ denotes the Fanning friction coefficient. 
In the one-dimensional approach, which is quite common when modeling gas flows in pipes due to the ratio of pipe length $L$ and cross section $D$; see e.g. \cite{Osiadacz1987Simulation,bressan2014flowsnetwor}; the unknowns $\rho$ and $u$ are averaged over the (assumed constant) cross section. 

The standard operating conditions hold the gas flow at moderate velocities; see e.g. \cite{Brouwer2011Gas} where they note a reference Mach number around 0.001; and thus we must account for the low Mach number or high friction regimes of \eqref{eq:euler}.
However, the limiting solution as this parameter goes to zero brings about a number of difficulties -- the most notable of which is that the underlying system becomes very stiff.
This makes designing efficient and accurate methods to approximate the solution quite challenging.
For example, conventional explicit schemes applied to system \eqref{eq:euler} would greatly over-resolve the solution in time, as the wave speeds of the system are proportional to the inverse of the reference Mach number -- further implying that the CFL stability restriction is proportional to this small parameter; see, e.g., \cite{GUILLARD1999behaviour,GUILLARD2004Behavior,RIEPER2010Dissipation}. 
Hence, explicit schemes may prove quite computationally expensive, especially in the low Mach/high friction limit.

One alternative to avoid this over-resolution issue are asymptotic-preserving (AP) schemes.
By the definition in \cite{Jin1995Runge,Jin1999Efficient}, a scheme is AP if the discretization of the continuous system remains consistent and stable regardless of the value taken for the underlying singular 
parameter (denoted by $\eps$ in this paper).
Furthermore, AP schemes allow for the space and time discretization to be independent of $\eps$; i.e., the CFL condition for stability does not depend on the small parameter in the system. 

Due to the potential speed up when discretizing with AP methods, they have attracted a lot of attention in the numerical and engineering community. 
AP schemes have been extensively studied for the kinetic equations; see, e.g., \cite{HU2017Handbook, Hu2018AP,Jin2000Uniformly,Jin2012AP,Jin2022AP,REN2014AP,ZHANG2016AP} and references therein. 
More recently, AP methods for the kinetic equations have been further extended to use AP Monte Carlo methods to reduce numerical diffusion effects \cite{FEI2023time,fei2024navier}, and to use AP neural networks to solve the underlying PDE system \cite{Jin2023APNN,Jin2024APNN}.
There has also been extensive development of AP methods in the low Mach limit of the isentropic Euler, compressible Euler, and Navier-Stokes systems in \cite{Arun2020AP,Arun2020APwG,AVGERINOS2019linearly,BISPEN2017AP,BOSCARINO2019high,Cordier2012AP,Degond2011AllSpeed,Dimarco2017Study,Haack2012AllSpeed,kanbar2024asymptotic,KLEIN1995Semi,Kucera2022Asymptotic,Noelle2014Weakly,samantaray2024asymptotic,Zeifang2020novel}, and in the low Froude limit of the shallow water equations in \cite{bispen2014imex,BOSCHERI2023All,BUSTO2022Staggered,DURAN2015AP,HUANG2022High,Kurganov2022WB,Liu2019AP,Vater2018Semi,XIE2024High,Zakerzadeh2017Finite}.

The number of studies significantly shrinks, though, when it comes to the consideration of nonlinear friction terms that remain in the limiting solution. 
When this nonlinear friction term, such as that in \eqref{eq:euler}, remains in the low Mach/Froude limit, it adds the difficulty that this source must be discretized implicitly in some manner. 
If discretized naively, this requires some a nonlinear solve, which would preferably be avoided. 
To our knowledge, only the work in \cite{BUSTO2022Staggered,Egger2023AP} consider some nonlinear friction term, in which only \cite{BUSTO2022Staggered} avoids a nonlinear solve via their proposed semi-implicit hybrid finite volume/finite element method. 

Furthermore, the number of studies of AP schemes in the extension to pipe networks is also quite limited. 
To our knowledge, there are only two such developments. 
In \cite{egger2022asymptotic}, they present an AP hybrid-DG method on networks for the linear convection-diffusion equation.
The work in \cite{Egger2023AP} proposes a finite element AP method applied to the barotropic Euler equations with a nonlinear friction term on pipe networks. 
However, the aformentioned method does opt for an implicit time discretization reliant on a nonlinear solver. 

In this paper, we propose an AP scheme for the isentropic Euler equations with a nonlinear friction source on pipe networks. 
The method on a single pipeline uses hyperbolic flux splitting proposed in \cite{Haack2012AllSpeed,Liu2019AP} to split the stiff and non-stiff parts of the system, which are then treated through an implicit-explicit approach. 
The non-stiff portion of the system is advanced in time explicitly and discretized in space using the second-order central-upwind (CU) finite volume scheme developed in \cite{Kurganov2001Semidiscrete,Kurganov2002Solution}.
The stiff part of the system is advanced in time implicitly using an approach related to Rosenbrock-type Runge-Kutta methods seen in, e.g., \cite{wanner1996solving,Zhong1996Additive}, 
which ultimately reduces this implicit stage to an elliptic equation that can then be solved linearly after discretizing in space via standard central-difference.
The proposed scheme for a single pipeline is extended to pipe networks by defining coupling conditions, such as those in \cite{Banda2006Gas,Banda2006Coupling}, at pipe-to-pipe intersections to ensure a mathematically well-posed problem.
We show that the coupling conditions remain well-posed in the low Mach/high friction regimes, and use them to define the ghost cell values of each pipeline.
The resulting method is tested on several numerical examples of pipe networks and produces accurate and non-oscillatory results, in addition to running significantly faster than the analogous fully explicit scheme in the low Mach/high friction limit. 


This paper is organized as follows. 
In \S\ref{sec2}, we discuss the asymptotics of the isentropic Euler equations \eqref{eq:euler} on pipe networks and associated numerical difficulties.
We then develop an AP scheme for the underlying system in \S\ref{sec3}. 
We present the performance of the proposed scheme on three examples in \S\ref{sec4}, and make some concluding remarks in \S\ref{sec5}.

\section{Dimensional Analysis}\label{sec2}
To look at the asymptotics of \eqref{eq:euler}, we non-dimensionalize the system by introducing characteristic time $t_0$, characteristic density $\rho_0$, characteristic velocity $w_0$, and characteristic pressure $p_0$, along with using the pipe length $L$ as the characteristic length. 
Therefore, the dimensionless quantities for system \eqref{eq:euler} read 
\begin{equation*}
    \widehat{x} = \frac{x}{L}, \qquad 
    \widehat{t} = \frac{t}{t_0}, \qquad 
    \widehat{\rho} = \frac{\rho}{\rho_0}, \qquad 
    \widehat{u} = \frac{u}{w_0}, \qquad 
    \widehat{p} = \frac{p}{p_0}.
\end{equation*} 
Taking $w_0 = L/t_0$, substituting these quantities into \eqref{eq:euler}, and dropping the hat notation for the sake of simplicity, we obtain the dimensionless isentropic Euler equations with a friction source term: 
\begin{equation*}
    \begin{aligned}
        & \rho_t + \p{\rho u}_x  = 0, \\
        &\p{\rho u}_t + \p{\rho u^2 + \frac{1}{\textrm{Ma}^2}p}_x = -\frac{\kappa}{2\delta} \rho u \abs{u},
    \end{aligned}
\end{equation*}
where 
$$\textrm{Ma} = w_0\sqrt{\frac{\rho_0}{p_0}}, \qquad \delta = \frac{D}{L},$$
are the reference Mach number and the ratio of the cross section to the pipe length, respectively.
We then choose to take the reference Mach number $\rm{Ma} = \eps$, and follow the suggestions of \cite{Brouwer2011Gas, Egger2023AP, Egger2023Stability} in taking $\delta = \eps^2/C_\delta$, where $C_\delta$ is a constant, resulting in the parameterized system  
\begin{equation}
    \begin{aligned}
        & \rho_t + \p{\rho u}_x  = 0, \\
        &\p{\rho u}_t + \p{\rho u^2 + \frac{1}{\eps^2}p}_x = -\frac{C_\delta \kappa}{2\eps^2} \rho u \abs{u}, 
    \end{aligned}
    \label{eq:euler_eps}
\end{equation}
which can otherwise be written in the following vector form
\begin{equation}
    \bm U_t + \bm F\p{\bm U}_x = \bm S\p{\bm U}, \quad 
    \bm F\p{\bm U} = \begin{pmatrix}
        \rho u \\[4pt]
        \rho u^2 + p/\eps^2
    \end{pmatrix}, \quad 
    \bm S\p{\bm U} = \begin{pmatrix}
        0 \\[4pt]
        -\frac{C_\delta \kappa}{2\eps^2} \rho u \abs{u}
    \end{pmatrix},
    \label{eq:vecform}
\end{equation}
where $\bm U := (\rho, \rho u)^\top$, $\bm F(\bm U)$ denotes the flux, and $\bm S(\bm U)$ is the source due to friction along the pipe walls.

\begin{rmk}\label{rmk2.1}
    Note that in the limit $\eps \rightarrow 0$, it is clear that system \eqref{eq:euler_eps} approaches a state where the spatial derivative of pressure balances the friction source due to the pipe walls. 
    This follows a classical limit commonly seen in the modeling of pipelines and pipe networks; see, e.g., {\cite{Brouwer2011Gas}}.
\end{rmk}

\subsection{Continuous Extension to Pipe Networks}\label{sec2.2new}
To simulate an entire pipe network, we must of course consider appropriate boundary conditions at each pipe entrance and exit -- the most complicated of which are at pipe-to-pipe intersections.
These are obtained by defining some coupling conditions at the pipe-to-pipe intersections or junctions; see, e.g., \cite{Banda2006Gas,Ehrhardt2005Nonlinear}.
In this section, we briefly describe some suitable condition options that may be prescribed at pipe junctions.
The following coupling conditions presented are commonly used examples for the mathematical framework of pipeline networks; see, for example, \cite{Herty2008Sim,Herty2013Assessment,Herty2019Fast} and references therein. 

Let us denote the entrance and exit of pipe $k = 1, \ldots, K$ as $x_{\rm i}^{(k)}$ and $x_{\rm f}^{(k)}$, respectively, and the variables in pipe $k$ as $\bm U^{(k)} = (\rho^{(k)}, (\rho u)^{(k)})^\top$. 
Consider a single junction in which pipes with indices $1,\ldots, m$ denote the ingoing pipelines and those with indices $m+1,\ldots, K$ are the outgoing pipelines, as depicted in Figure \ref{fig:junction}. 
Then at the junction, one must have the coupling condition for:\\

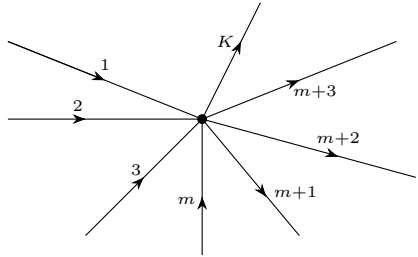
\begin{figure}[!ht]
\centering
\vspace*{-50pt}\resizebox{.4\textwidth}{!}{%
\begin{circuitikz}
\tikzstyle{every node}=[font=\tiny]
\draw (5.75,13.5) to[short] (5.75,13.5);
\node at (9.25,10.25) [circ] {};
\node at (7.65,10.42) [fill=white] {2};
\node at (8,10.95) [fill=white] {1};
\node at (8.4,9.6) [fill=white] {3};
\node at (9.05,9.2) [fill=white] {$m$};
\node at (10.45,9.3) [fill=white] {$m$+1};
\node at (11,10) [fill=white] {$m$+2};
\node at (10.7,10.6) [fill=white] {$m$+3};
\node at (9.55,11.25) [fill=white] {$K$};
\draw [short] (9.25,10.25) -- (6.75,10.25);
\draw [short] (9.25,10.25) -- (6.75,11.25);
\draw [short] (9.25,10.25) -- (7.75,8.75);
\draw [short] (9.25,10.25) -- (9.25,8.5);
\draw [short] (9.25,10.25) -- (10,11.75);
\draw [short] (9.25,10.25) -- (11.75,11.25);
\draw [short] (9.25,10.25) -- (12,9.5);
\draw [short] (9.25,10.25) -- (10.5,8.75);
\draw [->, >=Stealth] (7.74,10.25) -- (7.75,10.25);
\draw [->, >=Stealth] (6.75,11.25) -- (8,10.75);
\draw [->, >=Stealth] (8.499,9.499) -- (8.5,9.5);
\draw [->, >=Stealth] (9.25,9.24) -- (9.25,9.25);
\draw [->, >=Stealth] (10.0742,9.251) -- (10.075,9.25);
\draw [->, >=Stealth] (10.99,9.7685) -- (11,9.765);
\draw [->, >=Stealth] (10.499,10.7496) -- (10.5,10.75);
\draw [->, >=Stealth] (9.74991,11.2498) -- (9.75,11.25);
\end{circuitikz}
}%
\caption{\sf Illustration of a pipe network junction with $m$ ingoing pipelines and $K-m$ outgoing pipelines.}
\label{fig:junction}
\end{figure}

\noindent Conservation of momentum:
\begin{equation}\label{eq:cons_ru}
    \sum_{k = 1}^m (\rho u)^{(k)}(x_{\rm f}^{(k)},t) = \sum_{\ell = m+1}^K (\rho u)^{(\ell)}(x_{\rm i}^{(\ell)},t),
\end{equation}
paired with one of the following:
\begin{itemize}
    \item[(a)] Equal pressures (see, e.g., \cite{Banda2006Gas,Banda2006Coupling}):
    \begin{equation}\label{eq:eq_p}
        p\p{\rho^{(k)}(x_{\rm f}^{(k)},t)} = p\p{\rho^{(\ell)}(x_{\rm i}^{(\ell)},t)} \qquad  \forall k = 1,\dots, m, \qquad  \ell = m+1,\dots,K; 
    \end{equation}

    \item[(b)] Equal momentums (see, e.g., \cite{Colombo2006Well}):
    \begin{equation}\label{eq:eq_ru}
        \begin{aligned}
        &\p{\rho^{(k)}(u^{(k)})^2}(x_{\rm f}^{(k)},t) + \frac{1}{\eps^2}p\p{\rho^{(k)}(x_{\rm f}^{(k)},t)}
        =\p{\rho^{(\ell)}(u^{(\ell)})^2}(x_{\rm i}^{(\ell)},t) + \frac{1}{\eps^2}p\p{\rho^{(\ell)}(x_{\rm i}^{(\ell)},t)}\\[6pt]
        &\hspace{7.5cm}\forall k = 1,\dots, m, \qquad  \ell = m+1,\dots,K;
        \end{aligned}
    \end{equation}

    \item[(c)] Geometry or flow-dependent pressure loss (see, e.g., \cite{Osiadacz1987Simulation,white2003fluid}):
    \begin{equation}\label{eq:p_loss}
        \frac{1}{\eps^2}p\p{\rho^{(k)}(x_{\rm f}^{(k)},t)} 
        = \frac{1}{\eps^2}p\p{\rho^{(\ell)}(x_{\rm i}^{(\ell)},t)} - h_{k,\ell}, \qquad \forall k = 1,\dots, m, \qquad  \ell = m+1,\dots,K,
    \end{equation}
    where $h_{k,\ell}$ denotes the pressure loss at the junction. 

\end{itemize}
Notice that the $\eps$-dependence remains in the coupling conditions, as it arose from the non-dimensionalization of system \eqref{eq:euler}. 

The isentropic Euler system \eqref{eq:euler_eps} paired with \eqref{eq:cons_ru} and one of \eqref{eq:eq_p}--\eqref{eq:eq_ru} has been proven a well-posed problem under the condition that the initial data (i) is not in a vacuum state ($\rho^{(k)}(x,t = 0) > 0\ \forall k$); (ii) has a flow direction that does not change ($u^{(k)}(x,t = 0) \geq 0\ \forall k$); and (iii) is under subsonic conditions ($u^{(k)}(x,t = 0) \leq c\ \forall k$) \cite{Banda2006Coupling,Banda2006Gas,Colombo2006Well}.
These coupling conditions are implemented into the boundary conditions of each pipe $k$; we present how this is done for the proposed scheme in \S\ref{sec3.5.1} following the rich literature in this field, e.g., \cite{godlewski2004numerintercoupl,godlewski2005, ambroso2008coupllagran,chalons2008,MR922200,holden1995,muller2015,bretti2006fast,borsche2016numer,banda2016numer}. 


\subsection{The Low Mach/High Friction Limit}\label{sec2.2}
To investigate the asymptotic behavior of system \eqref{eq:euler_eps} inside each pipeline, we take the asymptotic expansion of  variables $\rho$ and $u$. 
Note that for simplicity, we removed the superscript $(k)$ introduced in the previous section for now, as the coupling conditions are only introduced on the boundaries. 
Thus, the associated asymptotic expansions of our unknowns are
\begin{equation*}
    \begin{aligned}
        \rho &= \rho^{(0)} + \eps^2 \rho^{(2)} + \cdots, \\
        u &= u^{(0)} + \eps^2 u^{(2)} + \cdots.
    \end{aligned}
\end{equation*}
Consequentially, we can obtain the asymptotic expansion for pressure $p = \rho^\gamma$ using Taylor series:
\begin{equation}
    p = \p{\rho^{(0)}}^\gamma + \eps^2 \gamma \p{\rho^{(0)} }^{\gamma-1}\rho^{(2)}+ \cdots.
    \label{eq:A_expan_pr}
\end{equation}
Note that the $\eps^1$ terms are skipped since there are no $\mathcal{O}(\eps^{-1})$ appearing in system \eqref{eq:euler_eps}.
Substituting the expansions of $\rho,\ u,\ p$ into system \eqref{eq:euler_eps} and collecting like powers of $\eps$, we obtain the asymptotic behavior of the isentropic Euler equations with a friction source:
\begin{equation}
    \begin{aligned}
        \mathcal{O}(\eps^{-2}):\quad & \br{\p{\rho^{(0)}}^\gamma}_x = -\frac{C_\delta \kappa}{2}\rho^{(0)} u^{(0)} \abs{u^{(0)}},\\[4pt]
        \mathcal{O}(1) :\quad & \rho^{(0)}_t + \p{\rho^{(0)}u^{(0)}}_x = 0, \\[2pt]
        &\p{\rho^{(0)}u^{(0)}}_t + \br{\rho^{(0)}\p{u^{(0)}}^2 + \gamma \p{\rho^{(0)}}^{\gamma -1} \rho^{(2)}}_x 
        \\
        &\hspace{2cm}= -\frac{C_\delta \kappa}{2} \p{\rho^{(2)} u^{(0)} \abs{u^{(0)}} + \rho^{(0)} u^{(2)} \abs{u^{(0)}} + \rho^{(0)} u^{(0)} \abs{u^{(2)}}}.
    \end{aligned}
    \label{eq:A_Behav}
\end{equation}
Here, note that the $\mathcal{O}(\eps^{-2})$ asymptotic is exactly that discussed in Remark \ref{rmk2.1}.

Similarly, we consider the $\eps \rightarrow 0$ case on the coupling conditions at the pipe intersections. 
It is clear that (i) the condition related to conservation of momentum will remain as it is $\eps$-independent; and (ii) regardless of your selection of coupling conditions \eqref{eq:eq_p}, \eqref{eq:eq_ru}, or \eqref{eq:p_loss}, all simplify to requiring equal pressures at the junction in the low Mach/high friction limit. Note that the different limiting equations have already been presented e.g. in \cite{Brouwer2011Gas}. The main aspect here is on their numerical treatment.


\subsection{Numerical Difficulties in Low Mach/High Friction regimes}\label{sec2.1}

By computing the eigenvalues of the Jacobian $\partial \bm F/ \partial \bm U$, one can find that the wave speeds of system \eqref{eq:vecform} are
$$ \set{ u \pm \frac{1}{\eps}\sqrt{p'(\rho)}}. $$
In turn, this implies that if one was to solve system \eqref{eq:euler_eps} using some standard explicit method with a uniform mesh spatial discretization with cell size $\dx$, the corresponding time-step restriction due to the CFL condition would be
\begin{equation}
    \dt_{\rm{ex}} \leq \nu \frac{\dx}{\displaystyle{\max_x}\set{\abs{u} + \frac{1}{\eps}\sqrt{p'(\rho)}}} = {\mathcal{O}(\eps \dx)},
    \label{eq:CFL_ex}
\end{equation}
where $0 < \nu \leq 1$ denotes the CFL number. 
On top of this, explicit schemes typically have numerical diffusion of ${\mathcal{O}\p{(\dx)^p/\eps}}$ \cite{Haack2012AllSpeed}, where $p$ is the order of the method, further implying that one would need to select $\dx = {\mathcal{O}(\eps^{1/p})}$ to combat excessive numerical diffusion in the results. 
Therefore, to obtain unsmeared results, the time-step restriction for explicit schemes would need to be $\dt_{\rm{ex}} = \mathcal{O}(\eps^{1+1/p})$. 
In other words, explicit schemes are quite inefficient in the low Mach and high friction regimes due to the significant computational cost.

An alternative to avoid the heavy time-step restriction seen in \eqref{eq:CFL_ex} would instead be to advance in time using an implicit method. 
However, this also could prove quite costly, as the nonlinearity of system \eqref{eq:euler_eps} implies a dependence on some nonlinear iterative solver of an $N\times N$ system of equations, where $N$ is the number of cells in the spatial discretization.

Thus, we want a scheme that removes this $\eps$-dependence in the time-step restriction, converges to the asymptotics in \eqref{eq:A_Behav}, and maintains the correct coupling conditions in the $\eps \rightarrow 0$ limit. 
The scheme proposed in the following section aims to meet these desired properties on the discrete level.


\section{Asymptotic-Preserving Scheme}\label{sec3}

To form an AP scheme for the system \eqref{eq:euler_eps} on a single pipeline, we follow the hyperbolic flux splitting idea from \cite{Haack2012AllSpeed,Liu2019AP}. 
To do so, we separate the slow and fast dynamics into two parts, resulting in the corresponding split system:
\begin{equation}
    \begin{aligned}
        & \rho_t + \alpha\p{\rho u}_x + \p{1-\alpha} \p{\rho u}_x = 0, \\[4pt]
        &\p{\rho u}_t + \p{\rho u^2 + \frac{p - a(t) \rho}{\eps^2}}_x + \frac{a(t)}{\eps^2}\rho_x = -\frac{C_\delta f}{2\eps^2} \rho u \abs{u}.
    \end{aligned}
    \label{eq:split}
\end{equation}
Equivalently, the split form \eqref{eq:split} can be written into an updated vector form, which reads 
\begin{equation}
    \bm U_t + \widetilde{\bm F}\p{\bm U}_x + \widehat{\bm F}\p{\bm U}_x = \bm S\p{\bm U},
    \label{eq:newvecform}
\end{equation} 
where 
\begin{equation}
    \widetilde{\bm F}\p{\bm U} = \begin{pmatrix}
        \alpha \rho u \\[4pt]
        \displaystyle{\rho u^2 + \frac{p - a(t) \rho}{\eps^2}}
    \end{pmatrix}, \qquad 
    \widehat{\bm F}\p{\bm U} = \begin{pmatrix}
        \p{1-\alpha}\rho u \\[4pt]
        \displaystyle{\frac{ a(t) \rho}{\eps^2}}
    \end{pmatrix}, 
    \label{eq:splitflux}
\end{equation}
are the slow (non-stiff) and fast (stiff) dynamics parts of the fluxes, respectively, and $\bm S(\bm U)$ is defined in \eqref{eq:vecform}. 

To guarantee the non-stiff subsystem $\bm U_t + \widetilde{\bm F}\p{\bm U}_x = \bm 0$ is indeed non-stiff and hyperbolic, $\alpha$ and $a(t)$ must be chosen appropriately to remove the $1/\eps^2$ dependence on the wave speeds. 
Computed by finding the eigenvalues of the Jacobian $\partial \widetilde{\bm F}/ \partial \bm U$, the wave speeds of this subsystem are
\begin{equation}
     \set{u \pm \sqrt{\p{1-\alpha}u^2 + \frac{\alpha\br{p'(\rho)-a(t)}}{\eps^2}}}.
     \label{eq:eigen}
\end{equation}
Thus, to ensure the eigenvalues of the non-stiff subsystem are real and are $\mc O (1)$ instead of $\mc O (\eps^{-2})$ as in the original system \eqref{eq:euler_eps}, we take 
\begin{equation}
    \alpha = \eps^b \qquad \textrm{and} \qquad a(t) = \min_{x} p'(\rho),
    \label{eq:alpha_a}
\end{equation}
and $b \geq 2$. 
In turn, the new wave speeds in \eqref{eq:eigen} now avoid the dissipation and time-step issues seen in original system \eqref{eq:euler_eps}, allowing the slow dynamics to be discretized using any appropriate hyperbolic solver. 
We describe the hyberbolic solver we use in \S\ref{sec3.2}.

\begin{rmk}\label{rmk3.1}
    Note that, in comparison to the work of \cite{Haack2012AllSpeed,Liu2019AP}, we have less freedom in the selection of this parameter $b$ arising in \eqref{eq:alpha_a}. 
    This is due to the fact that the asymptotic behavior shown in \eqref{eq:A_Behav} allows for a non-constant $\rho^{(0)}$ when $\eps \rightarrow 0$, meaning $\br{p'(\rho)-a(t)} $ is  $\mc O(1).$
\end{rmk}


\subsection{Time Discretization}\label{sec3.1}
To discretize the split system \eqref{eq:newvecform}--\eqref{eq:splitflux} in a way that relaxes the time-step stability restriction, we use the implicit-explicit (IMEX) approach; that is, we approximate the non-stiff flux terms $\widetilde{\bm F}(\bm U)_x$ explicitly, and use an implicit approximation for the stiff flux terms $\widehat{\bm F}(\bm U)_x$ and the friction source $\bm S(\bm U).$
Since the source term $\bm S (\bm U)$ is nonlinear, we opt for a time discretization related to Rosenbrock-type Runge-Kutta methods, which will still allow the system to be solved without the need of nonlinear solvers; see, e.g., \cite{wanner1996solving,Zhong1996Additive} and references therein. 
To this end, we use the following first-order IMEX time discretization:
\begin{align}
    &\frac{\rho^{n+1} - \rho^n}{\dt} + \alpha \p{\rho u}^n_x + \p{1-\alpha} \p{\rho u}^{n+1}_x = 0, 
    \label{eq:IMEXrho}\\[4pt]
    &\frac{\p{\rho u}^{n+1} - \p{\rho u}^n}{\dt} + \p{\rho u^2 + \frac{p -a^n \rho}{\eps^2}}^n_x + \frac{a^n}{\eps^2}\rho^{n+1}_x = -\frac{C_\delta \kappa}{2\eps^2} \abs{u^n}\p{\rho u}^{n+1}. 
    \label{eq:IMEXru}
\end{align}
The influence of the Rosenbrock-type method appears in the discretization of the source of \eqref{eq:IMEXru}, in which only the $\rho u$ term is evaluated at time $t^{n+1}$.
For simplicity and notation purposes, let us define 
\begin{equation*}
    \bm R^n := (R^{\rho,n}, R^{\rho u, n})^\top = -\widetilde{\bm F}(\bm U)_x. 
\end{equation*}
One can then solve equations \eqref{eq:IMEXrho}--\eqref{eq:IMEXru} for $\rho^{n+1}$ and $(\rho u)^{n+1}$, respectively, to obtain
\begin{align}
    \rho^{n+1} &= \rho^n + \dt R^{\rho,n} - \dt (1-\alpha)\p{\rho u}^{n+1}_x, 
    \label{eq:semi_rho}\\
    \p{\rho u}^{n+1} &= \frac{1}{\Psi^n(x)}\br{\p{\rho u}^n + \dt R^{\rho u, n} - \frac{a^n \dt}{\eps^2}\rho^{n+1}_x},
    \label{eq:semi_ru}
\end{align}
where 
\begin{equation}\label{eq:psi}
    \Psi^n(x) := 1 +  \dt \cdot \frac{C_\delta \kappa}{2\eps^2} \abs{u^n}
\end{equation}
is always greater than 1.
Following then that of \cite{Degond2011AllSpeed}, we then differentiate  \eqref{eq:semi_ru} with respect to $x$ and substitute this into \eqref{eq:semi_rho} to obtain an elliptic equation for $\rho^{n+1}:$
\begin{equation}
    \rho^{n+1} - \frac{\dt^2}{\eps^2}a^n(1-\alpha)\p{\frac{\rho^{n+1}_x}{\Psi^n}}_x 
    = \rho^n + \dt R^{\rho,n} -\dt (1-\alpha)\br{\frac{\p{\rho u}^n + \dt R^{\rho u,n}}{\Psi^n}}_x.
    \label{eq:elliptic_rho}
\end{equation}
Assuming all values at time $t^n$ are known, this implies that the choice of the Rosenbrock-type method in \eqref{eq:IMEXru} results in just a linear equation for the unknown $\rho^{n+1}$.
Furthermore, the time discretization in \eqref{eq:IMEXru} in combination with an appropriate spatial discretization, such as that further detailed in \S\ref{sec3.2} and \S\ref{sec3.3}, will result in \eqref{eq:elliptic_rho} being a linear system that can be solved for $\rho^{n+1}$. 
This solution of $\rho^{n+1}$ is then substituted into \eqref{eq:semi_ru} to obtain $(\rho u)^{n+1}$.


\subsection{Spatial Discretization of Non-Stiff Flux Terms}\label{sec3.2}
We use the Central-Upwind (CU) finite volume scheme to discretize the non-stiff flux terms $\widetilde{\bm F}(\bm U)$.
For each pipe in the network, 
we split the domain into $N$ finite volume cells $C_j = [x_\jmh,x_\jph]$, where $\dx = x_\jph - x_\jmh$. 
Assume that at time level $t = t^n$, the solution is realized in terms of its cell averages
\begin{equation*}
    \xbar{\bm U}_j^n = \frac{1}{\dx}\int_{C_j} \bm U(x,t^n)\ dx.
\end{equation*}
Then we approximate the contribution of the non-stiff flux terms $\bm R := -\widetilde{\bm F}(\bm U)_x$ in each cell using
\begin{equation}
    \bg{\mc{R}}^n_j = -\frac{\widetilde{\bg{\mc{F}}}^n_\jph - \widetilde{\bg{\mc{F}}}^n_\jmh}{\dx},
    \label{eq:nonstiff}
\end{equation}
where $\widetilde{\bg{\mc{F}}}^n_{j \pm \frac{1}{2}}$ are computed using the CU fluxes; see, e.g., \cite{Harten1993Upstream,Kurganov2001Semidiscrete,Kurganov2002Solution}:
\begin{equation}
    \widetilde{\bg{\mc{F}}}^n_\jph = \frac{
    s^+_\jph \widetilde{\bm F}\p{\bm U^-_\jph} 
    - s^-_\jph \widetilde{\bm F}\p{\bm U^+_\jph}}{s^+_\jph-s^-_\jph}
    +\frac{s^+_\jph s^-_\jph}{s^+_\jph-s^-_\jph}\p{\bm U^+_\jph - \bm U^-_\jph}.
    \label{eq:CUflux}
\end{equation}
Here, $\bm U^\pm_\jph$ are one-sided point values of $\bm U$ at the cell interface, computed at time $t=t^n$ using the piecewise linear reconstruction
\begin{equation*}
    \widetilde{\bm{U}}^n(x) := \xbar{\bm U}^n_j + \p{\bm U_x}^n_j\p{x - x_j}, \quad x \in C_j.
\end{equation*}
Therefore, the values of the reconstruction at the interface $x_\jph$ are
\begin{equation}
    \bm U^-_\jph = \xbar{\bm U}^n_j + \frac{\dx}{2}\p{\bm U_x}^n_j, 
    \qquad 
    \bm U^+_\jph = \xbar{\bm U}^n_{j+1} - \frac{\dx}{2}\p{\bm U_x}^n_{j+1}, 
    \label{eq:UpmHalf}
\end{equation}
where the slopes $\p{\bm U_x}^n_j$ are computed using a nonlinear limiter to avoid oscillations. 
In this paper, we use the generalized minmod limiter (see, e.g., \cite{Lie2003Artificial,Nessyahu1990NonOsc,Sweby1984High}):
\begin{equation*}
    \p{\bm U_x}^n_j = {\textrm{minmod}}\p{
    \theta \frac{\xbar{\bm U}^{n}_{j+1}-\xbar{\bm U}^{n}_{j}}{\dx},\ 
    \frac{\xbar{\bm U}^{n}_{j+1}-\xbar{\bm U}^{n}_{j-1}}{2\dx},\ 
    \theta \frac{\xbar{\bm U}^{n}_{j}-\xbar{\bm U}^{n}_{j-1}}{\dx} },
\end{equation*}
where the minmod function is 
\begin{equation*}
\mbox{minmod}(z_1,z_2,\ldots)=\left\{\begin{aligned}
&\min(z_1,z_2,\cdots)&&\mbox{if}~z_i>0,~\forall i,\\
&\max(z_1,z_2,\cdots)&&\mbox{if}~z_i<0,~\forall i,\\
&0&&\mbox{otherwise},
\end{aligned}\right.
\end{equation*}
and is applied component-wise. 
Lastly, $s^\pm_\jph$ of \eqref{eq:CUflux} denote the one-sided speeds of propagation, estimated at time $t=t^n$ using the largest and smallest eigenvalues of the Jacobian $\partial \widetilde{\bm F}/ \partial \bm U$ seen in \eqref{eq:eigen}:
\begin{equation}
    \begin{aligned}
        s^+_\jph =& \max\left\{u^-_\jph + \sqrt{\p{1-\alpha}\p{u^-_\jph}^2 + \frac{\alpha}{\eps^2}\br{ p'\p{\rho^-_\jph}-a^n}},\right.\\
        &\hspace{2cm}\left.u^+_\jph + \sqrt{\p{1-\alpha}\p{u^+_\jph}^2 + \frac{\alpha}{\eps^2}\br{ p'\p{\rho^+_\jph}-a^n}}, 0\right\},\\[6pt]
        s^-_\jph =& \min\left\{u^-_\jph - \sqrt{\p{1-\alpha}\p{u^-_\jph}^2 + \frac{\alpha}{\eps^2}\br{ p'\p{\rho^-_\jph}-a^n}},\right.\\
        &\hspace{2cm}\left.u^+_\jph - \sqrt{\p{1-\alpha}\p{u^+_\jph}^2 + \frac{\alpha}{\eps^2}\br{ p'\p{\rho^+_\jph}-a^n}}, 0\right\}.
    \end{aligned}
    \label{eq:wavespeeds}
\end{equation}

\subsection{Fully Discrete Method}\label{sec3.3}
To obtain the fully discrete AP scheme for cell $C_j$, we take the elliptic equation for $\rho^{n+1}$ in \eqref{eq:elliptic_rho} and the equation for $(\rho u)^{n+1}$ in \eqref{eq:semi_ru} and
(i) compute non-stiff flux terms $\bm R^n = -\widetilde{\bm F}(\bm U)_x$ using the CU numerical fluxes described in \S\ref{sec3.2}; 
(ii) discretize the first
spatial derivative terms using standard second-order central difference; and 
(iii) use a standard finite difference approximation for second derivatives on the term $\p{\rho_x^{n+1}/\Psi^n}_x$.
Thus, the fully discrete AP scheme for $\xbar \rho_j^{n+1}$ reads
\begin{equation}
    \begin{aligned}
        &\xbar{\rho}^{n+1}_j - \frac{\dt^2}{\dx^2\eps^2}a^n(1-\alpha)\br{\phi^n_\jph \xbar{\rho}^{n+1}_{j+1} - \p{\phi^n_\jph + \phi^n_\jmh}\xbar{\rho}^{n+1}_j + \phi^n_\jmh \xbar{\rho}^{n+1}_{j-1}}\\
        &\hspace{4cm} = \xbar{\rho}^n_j+ \dt \mc{R}^{\rho,n}_j - \dt (1-\alpha) D_x \br{\frac{1}{\Psi^n_j}\p{\p{\xbar{\rho u}}^n_j + \dt \mc{R}^{\rho u, n}_j} },
    \end{aligned}
    \label{eq:discrete_rho}
\end{equation}
where $\bg{\mc{R}}^n_j = (\mc R^{\rho,n}_j, \mc R^{\rho u, n}_j)^\top$ is defined in \eqref{eq:nonstiff}--\eqref{eq:CUflux}, $\Psi_j^n = \Psi^n(x_j)$ from \eqref{eq:psi},
$$D_x\xi_j = \frac{\xi_{j+1}-\xi_{j-1}}{2\dx}$$
is the discrete central difference operator, and 
$$\phi^n_\jph = \frac{1}{2}\br{\frac{1}{\Psi^n_j} + \frac{1}{\Psi^n_{j+1}}}$$
is used for the evaluation of $1/\Psi_\jph^n$ to keep the second-order spatial approximation in the elliptic solve. 
Note that the definition of $\Psi_j^n$ in \eqref{eq:psi} results in $0 < \phi^n_\jph \leq 1$, in turn implying that the corresponding matrix to be formed on the right hand side of \eqref{eq:discrete_rho} is non-singular (by Gershgorin Theorem).

After solving the linear system in \eqref{eq:discrete_rho} for $\set{\xbar \rho_j^{n+1}}$, it can be directly substituted into the spatially discretized version of \eqref{eq:semi_ru} to obtain the fully discrete approximation for $\p{\xbar{\rho u}}^{n+1}_j$:
\begin{equation}
    \p{\xbar{\rho u}}^{n+1}_j = \frac{1}{\Psi^n_j}\br{\p{\xbar{\rho u}}^n_j + \dt \mc{R}^{\rho u, n}_j - \frac{a^n \dt}{\eps^2}D_x\xbar\rho^{n+1}_j}.
    \label{eq:discrete_ru}
\end{equation}

\subsection{Numerical Diffusion and Stability}\label{sec3.4}
In this section, we describe how the proposed method addresses the numerical difficulties of explicit schemes previously discussed in \S\ref{sec2.1}.

The stability of the proposed AP scheme is ensured by Lemma 3.1 of \cite{Haack2012AllSpeed}, which states that if each piece (the implicit and explicit portions) of the split method is stable, then the full method is also stable. 
For the fast (stiff) dynamics, one can show that implicit discretization in time, 
seen in \eqref{eq:IMEXrho}--\eqref{eq:IMEXru}, is stable for all $\rho^n$ and $u^n$.
Thus, stability of the proposed AP scheme is controlled by the time discretization of the slow (non-stiff) dynamics. 
The CFL condition for the slow dynamics can be found using the eigenvalues from \eqref{eq:eigen}, resulting in the following time-step restriction:
\begin{equation}
    \dt_{\rm{AP}} \leq \nu \frac{\dx}{\displaystyle{\max_x}\set{\abs{u} + \sqrt{\p{1-\alpha}u^2 + \frac{\alpha}{\eps^2}\br{p'(\rho)-a(t)}}}},
    \label{eq:CFL_ap}
\end{equation}
in which, due to the selection of $\alpha$ and $a(t)$ made in \eqref{eq:alpha_a}, the denominator is independent of $\eps$. 

To ensure $\dt_{\rm{AP}}$ is completely $\eps$-independent, we must also confirm the numerical diffusion of the proposed scheme does not have a $\eps^{-1}$ dependence or worse. 
To do this, we analyze the leading order of numerical diffusion of the CU spatial discretization described in \S\ref{sec3.2}, and substitute this into the full discretization in equations \eqref{eq:discrete_rho}--\eqref{eq:discrete_ru}.
We start by rewriting the CU flux in \eqref{eq:CUflux} in the form 
\begin{equation*}
    \widetilde{\bg{\mc{F}}}^n_\jph = \frac{\widetilde{\bg{F}}(\xbar{\bm U}_j^n)+\widetilde{\bg{F}}(\xbar{\bm U}_{j+1}^n)}{2} + \bg{\mc{D}}^n_\jph,
\end{equation*}
where 
\begin{equation}\label{eq:diff}
    \begin{aligned}
        \bg{\mc{D}}^n_\jph =& \frac{s^+_\jph}{2\p{s^+_\jph-s^-_\jph}}\br{2 \widetilde{\bg{F}}(\bm U^-_\jph)-\widetilde{\bg{F}}(\xbar{\bm U}_j^n)-\widetilde{\bg{F}}(\xbar{\bm U}_{j+1}^n)}\\[4pt]
        &-\frac{s^-_\jph}{2\p{s^+_\jph-s^-_\jph}}\br{2 \widetilde{\bg{F}}(\bm U^+_\jph)-\widetilde{\bg{F}}(\xbar{\bm U}_j^n)-\widetilde{\bg{F}}(\xbar{\bm U}_{j+1}^n)}+\frac{s^+_\jph s^-_\jph}{s^+_\jph-s^-_\jph}\p{\bm U^+_\jph - \bm U^-_\jph},
    \end{aligned}
\end{equation}
is the numerical diffusion term of the CU discretization, with $\bg{\mc{D}}_\jph^n := (\mc D^{\rho,n}_\jph, \mc D^{\rho u,n}_\jph)^\top$ to denote the different components. 
Here, $s^\pm_\jph$ are defined in \eqref{eq:wavespeeds} and $\bm U^\pm_\jph$ are defined in \eqref{eq:UpmHalf}. 
To compute the leading order of the numerical diffusion in \eqref{eq:diff}, we introduce the following asymptotic expansions:
\begin{align*}
    \xbar \rho^n_j & = \rho^{(0),n}_j + \eps^2 \rho^{(1),n}_j + \cdots,\\
    \rho^\pm_\jph & = \rho^{(0),\pm}_\jph + \eps^2 \rho^{(1),\pm}_\jph + \cdots.
\end{align*}
The asymptotic expansion of $u_j^n$ and $u^\pm_\jph$ follows analogously, and the discrete asymptotic expansion of the pressure term follows the same form as that in equation \eqref{eq:A_expan_pr}.
In addition, we expand $a^n$ in a way that follows its definition in \eqref{eq:alpha_a}:
$$
a^n = \min_x \gamma \p{\rho_j^{(0),n}}^{\gamma -1} + \eps^2 a^{(2),n} + \cdots,
$$
where $a^{(2),n}$ is a constant in space at time $t^n$.
Using these expansions, we can obtain the leading order of the numerical diffusion \eqref{eq:diff} for $\rho$:
\begin{equation}\label{eq:diff_rho}
    \begin{aligned}
        \mc{D}^{\rho,n}_\jph =& \frac{\alpha s^+_\jph}{2\p{s^+_\jph-s^-_\jph}}\br{2 \rho^{(0),-}_\jph u^{(0),-}_\jph-\rho^{(0),n}_j u^{(0),n}_j-\rho^{(0),n}_{j+1} u^{(0),n}_{j+1}}\\[4pt]
        &-\frac{\alpha s^-_\jph}{2\p{s^+_\jph-s^-_\jph}}\br{2 \rho^{(0),+}_\jph u^{(0),+}_\jph-\rho^{(0),n}_j u^{(0),n}_j-\rho^{(0),n}_{j+1} u^{(0),n}_{j+1}}\\
        &+\frac{s^+_\jph s^-_\jph}{s^+_\jph-s^-_\jph}\p{\rho^{(0),+}_\jph - \rho^{(0),-}_\jph}.
    \end{aligned}
\end{equation}
Since $\alpha = \eps^b,\ b \geq 2$ from \eqref{eq:alpha_a}, this is $\mc O(1)$ by the last term of \eqref{eq:diff_rho}, as $\rho$ is generally not constant in space.
Similarly, one can compute the leading order of numerical diffusion \eqref{eq:diff} for $\rho u$, which comes from the $\mc O(\eps^{-2})$ portion of the flux $\widetilde{\bm F}(\bm U)$ seen in equation \eqref{eq:newvecform}. 
However, since $a^n$ and its expansion are related to $p'(\rho)$ and $p(\rho) = \rho^\gamma$, it is clear that the expansions to compute the diffusion will not result in any cancellation of the $\eps^{-2}$ term. 
Thus, $\mc{D}^{\rho u,n}_\jph = \mc O(\eps^{-2})$.


Finally, substituting the CU fluxes into \eqref{eq:nonstiff}, we can obtain the following approximations to $\bg{\mc R}_j^n$: 
\begin{align}
    \mc R_j^{\rho,n} &= - D_x\widetilde{F}^\rho(\bm U_j^n) + \mc O(\dx^2), \label{eq:R_rho}\\[4pt]
    \mc R_j^{\rho u,n} &= - D_x\widetilde{F}^{\rho u}(\bm U_j^n) + \mc O\p{\frac{\dx^2}{\eps^2}}, \label{eq:R_ru}
\end{align}
where $\widetilde{\bm F} := (\widetilde{F}^\rho, \widetilde{F}^{\rho u})^\top$ and $D_x$ again represents the central difference operator. 
Here, the $\mc O(\dx^2)$ term in \eqref{eq:R_rho} and the $\mc O(\eps^{-2}\dx^2)$ term in \eqref{eq:R_ru} come from the leading orders from the diffusion $\mc{D}^{\rho,n}_\jph$ and $\mc{D}^{\rho u,n}_\jph$, respectively, and the $\dx^2$ piece arises since these diffusion terms are introduced via the second-order CU discretization. 

Normally, this $\mc O(\eps^{-2}\dx^2)$ diffusion term is concerning.
However, if we substitute \eqref{eq:R_ru} into the fully discrete AP scheme seen \eqref{eq:discrete_rho} and \eqref{eq:discrete_ru}, we see that this $\mc O(\eps^{-2}\dx^2)$ diffusion term is always paired with a division by $\Psi_j^n$. 
Then, since $\Psi_j^n$ defined in \eqref{eq:psi} is $\mc O(\eps^{-2})$, the dependence of $\eps$ in the leading order diffusion term cancels.
This $\eps$-independence within the numerical diffusion, in combination with the denominator of \eqref{eq:CFL_ap} being independent of $\eps$, implies that the time-step restriction of the proposed AP scheme is indeed independent of $\eps$. 
Furthermore, it is sufficient to take $\dt_{\rm AP} = \mc O(\dx)$ (as opposed to the $\mc O(\eps\dx)$ restriction on explicit schemes) to enforce stability.

\subsection{Boundary Conditions on Pipe Networks}\label{sec3.5}
\subsubsection{Coupling Conditions at Pipe Intersections}\label{sec3.5.1}
The algorithm described in \S\ref{sec3.1}--\S\ref{sec3.3} is applied to every pipe within the network of interest. 
To extend to a full network, we must implement coupling conditions \eqref{eq:cons_ru}, with one choice of \eqref{eq:eq_p}, \eqref{eq:eq_ru} or \eqref{eq:p_loss}, on the boundaries of each pipe that meets at the intersections. 
To obtain the boundary values corresponding to the junction, we solve the so-called half-Riemann problem: 
Assume a set of coupling conditions from \S\ref{sec2.2new} and given constant initial data $(\bm U^0)^{(k)}$ for each pipe $k$. 
We then seek the values $\bm U^*$ satisfying the coupling conditions such that the half-Riemann problem~\cite{herty2006coupl,herty2006coupl,garavello2016model} 
\begin{equation}\label{eq:half_R}
    \begin{aligned}
        & \bm U^{(k)}_t + \bm F(\bm U^{(k)})_x = \bm 0, \\[2pt]
        &\bm U(x,0) = \left\{
        \begin{aligned}
            &\bm U^*, && \textrm{if } x \leq 0,\\
            & (\bm U^0)^{(k)},&& \textrm{if } x > 0,
        \end{aligned}\right.
    \end{aligned}
\end{equation}
admits a self-similar solution in which all generated waves have \textit{non-negative} speeds. 
Here, $\bm U$ and $\bm F(\bm U)$ are defined in \eqref{eq:vecform}.
Then, if considering coupling conditions \eqref{eq:cons_ru} and \eqref{eq:eq_p}, this amounts to solving the following nonlinear system for $(\rho^*)^{(k)}$:
\begin{equation}\label{eq:Lax_sys}
    \begin{aligned}
        &\sum_{k = 1}^K L_2^-\p{(\rho^*)^{(k)}; (\rho^n)^{(k)},L_1^+\p{(\rho^n)^{(k)};(\bm U^0)^{(k)}} } = 0, \\[4pt]
        &p\p{(\rho^*)^{(k)}}
        = p\p{(\rho^*)^{(\ell)}}
        , \qquad k \neq \ell,
    \end{aligned} 
\end{equation}
where the known states are $(\bm U^0)^{(k)}$ and $(\bm U^n)^{(k)}$, and the functions $L_{1,2}^\pm$ denote the forward and reversed 1-Lax curve and 2-Lax curve for the isothermal Euler equations in equation \eqref{eq:euler_eps} with $p = \rho^\gamma$ (see similarly, \cite{Colombo2006Well}):
\begin{equation}\label{eq:Lax}
\begin{aligned}
    L_1^+\p{\rho; \widehat{\rho}, \widehat{q}}
    &=
    \left\{
        \begin{aligned}
            & \frac{\rho}{\widehat{\rho}} \ \widehat{q} - \frac{2}{\gamma - 1} \rho \cdot \frac{1}{\eps}\p{\sqrt{p'(\rho)}-\sqrt{p'(\widehat{\rho})}}, && \textrm{if } \rho < \widehat{\rho},\\[4pt]
            &\frac{\rho}{\widehat{\rho}} \ \widehat{q} - \frac{1}{\eps}\sqrt{\frac{\rho}{\widehat{\rho}}\p{\rho - \widehat{\rho}}\br{p(\rho)-p(\widehat{\rho})} }, &&\textrm{if } \rho > \widehat{\rho},
        \end{aligned}\
    \right.\\[8pt]
    L_2^-\p{\rho; \widehat{\rho}, \widehat{q}}
    &=
    \left\{
        \begin{aligned}
            & \frac{\rho}{\widehat{\rho}} \ \widehat{q} - \frac{2}{\gamma - 1} \rho \cdot \frac{1}{\eps}\p{\sqrt{p'(\widehat{\rho})}-\sqrt{p'(\rho)}}, && \textrm{if } \rho < \widehat{\rho}.\\[4pt]
            &\frac{\rho}{\widehat{\rho}} \ \widehat{q} + \frac{1}{\eps}\sqrt{\frac{\rho}{\widehat{\rho}}\p{\rho - \widehat{\rho}}\br{p(\rho)-p(\widehat{\rho})} },
            &&\textrm{if } \rho > \widehat{\rho}, 
        \end{aligned}\
    \right.
\end{aligned}
\end{equation}
The construction of system \eqref{eq:Lax_sys} would follow analogously if instead using the coupling conditions in \eqref{eq:eq_ru} or \eqref{eq:p_loss}.


The known states $(\bm U^n)^{(k)}$ seen in \eqref{eq:Lax_sys} are selected by taking the 
cell value nearest the junction at time $t = t^n$; i.e., we take $(\bm U^n)^{(k)} = (\bm U^n_{N})^{(k)}$ for ingoing pipelines and $(\bm U^n)^{(k)} = (\bm U^n_{1})^{(k)}$ for outgoing pipelines. 
The nonlinear system \eqref{eq:Lax_sys} is solved via Newton's method with initial guess $\bm U^*= (\bm U^n)^{(k)}$, in which the Jacobian within the Newton iteration is invertible if the initial guess is subsonic \cite{Colombo2006Well}.
Note the $1/\eps$ dependence in the 1-Lax curve and 2-Lax curve only arises due to considering the non-dimensionalized system \eqref{eq:vecform}, and does not destroy the nonlinear solve of \eqref{eq:Lax_sys}.
This is further verified in Lemma \ref{lma3.2}.
For the numerical experiments conducted in this paper, we observed convergence to a tolerance of $10^{-8}$ within $\sim$2--3 Newton iterations. 

Once $(\rho^*)^{(k)}$ is computed, we can directly calculate $(\rho^* u^*)^{(k)}$ using 
$$
(\rho^* u^*)^{(k)} = L_2^-\p{(\rho^*)^{(k)}; (\rho^n)^{(k)},L_1^+\p{(\rho^n)^{(k)};(\bm U^0)^{(k)}} }. 
$$
The results of the nonlinear solve $(\rho^*)^{(k)}$ and $(\rho^* u^*)^{(k)}$ are then prescribed as the ghost cell values in the spatial discretization described in \S\ref{sec3.2}.

\begin{lemma}\label{lma3.2}
    In the limiting case $\eps \rightarrow 0$, the nonlinear system \eqref{eq:Lax_sys} with Lax curve definitions \eqref{eq:Lax} remains a well-posed problem.  
\end{lemma}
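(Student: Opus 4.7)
The plan is to show that, although the Lax curves \eqref{eq:Lax} contain explicit $1/\eps$ factors that appear singular, the nonlinear system \eqref{eq:Lax_sys} remains well-posed as $\eps \to 0$ because those singular factors are paired with density differences that themselves vanish at the appropriate rate in $\eps$.

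First, I would insert the asymptotic expansions $\rho^{*(k)} = \rho^{*(0),(k)} + \eps^2 \rho^{*(2),(k)} + \cdots$ (and analogously for $\rho^{n(k)}$ and $\rho^{0(k)}$, consistent with the expansion in \S\ref{sec2.2}) into \eqref{eq:Lax}. A Taylor expansion of $\sqrt{p'(\rho)}$ about the leading-order density shows that any difference $\sqrt{p'(\rho^{*(k)})} - \sqrt{p'(\rho^{n(k)})}$ is $\mc O(\eps^2)$ whenever the leading-order densities agree, and an analogous calculation handles the shock branch $\sqrt{\rho(\rho-\widehat{\rho})(p(\rho)-p(\widehat{\rho}))/\widehat{\rho}}$, since $p(\rho) = \rho^\gamma$ is smooth. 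The asymptotic analysis \eqref{eq:A_Behav} at order $\eps^{-2}$, combined with the $\eps \to 0$ limit of the coupling conditions observed at the end of \S\ref{sec2.2} (all collapse to equal pressures at the junction), forces the leading-order trace $\rho^{*(0),(k)}$ to be common across $k$, and the subsonic, non-vacuum initial data carries that agreement to $\rho^{n(0),(k)}$ and $\rho^{0(0),(k)}$. Consequently every $\eps^{-1}$-weighted square root in \eqref{eq:Lax_sys} contributes at worst $\eps^{-1} \cdot \mc O(\eps^2) = \mc O(\eps)$, so the system admits a finite, $\eps$-independent leading order.

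Next, I would identify this leading-order system explicitly. The equal-pressure constraints $p(\rho^{*(k)}) = p(\rho^{*(\ell)})$ are $\eps$-independent and survive unchanged. The summed-$L_2^-$ equation, after multiplication by $\eps$ and passage to the limit, produces an $\mc O(1)$ relation of the form $\sum_k \rho^{*(0),(k)}\, \Phi_k = 0$, where $\Phi_k$ is a linear combination of $\sqrt{p'}$ evaluated at $\rho^{*(0),(k)}$, $\rho^{n(0),(k)}$, and $\rho^{0(0),(k)}$. Together, these give a nonlinear system for $\{\rho^{*(0),(k)}\}$ whose unique solvability follows from strict monotonicity of $\rho \mapsto \sqrt{p'(\rho)}$ for $p = \rho^\gamma$ with $\gamma > 1$, and whose Jacobian is invertible by the standard subsonic junction analysis used in, e.g., \cite{Colombo2006Well}.

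The main obstacle is bookkeeping the piecewise definitions in \eqref{eq:Lax}: the rarefaction ($\rho < \widehat{\rho}$) and shock ($\rho > \widehat{\rho}$) branches have different algebraic forms, and in each branch one must verify the $\mc O(\eps^2)$ cancellation separately. The subsonic and non-vacuum hypotheses from \S\ref{sec2.2new} are essential: they guarantee that the Newton iterate stays in a neighborhood of the data where leading-order density differences remain $\mc O(\eps^2)$ rather than escaping to $\mc O(1)$, which is precisely what prevents the $1/\eps$ factors from destroying well-posedness and ensures the Jacobian remains non-singular uniformly in $\eps$.
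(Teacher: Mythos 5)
There is a genuine gap in your argument, and it sits at the load-bearing step. You claim that the differences $\sqrt{p'(\rho^{*(k)})}-\sqrt{p'(\rho^{n,(k)})}$ (and the analogous shock-branch expressions) are $\mc O(\eps^2)$ because the leading-order trace $\rho^{*(0),(k)}$ is common across $k$ and this agreement ``carries to'' $\rho^{n,(k)}$ and $\rho^{0,(k)}$. Neither half of this is justified. The equal-pressure limit of the coupling conditions constrains the traces $\rho^{*(k)}$ across pipes at the junction only; it says nothing about the relation between $\rho^{*(k)}$ and the data $\rho^{n,(k)}$, $\rho^{0,(k)}$ that enter \eqref{eq:Lax} as the second arguments. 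Moreover, the $\mc O(\eps^{-2})$ balance in \eqref{eq:A_Behav} forces the pressure gradient to balance friction, not the density to be constant --- Remark \ref{rmk3.1} explicitly allows a spatially non-constant $\rho^{(0)}$ --- so there is no reason the leading-order densities on the two sides of each Lax curve coincide. Worse, the claim is circular: whether $\rho^{*}$ stays $\mc O(\eps^2)$-close to the data is a property of the solution of \eqref{eq:Lax_sys}, which is exactly what well-posedness is supposed to deliver. Your second paragraph is also internally inconsistent with your first: if every $\eps^{-1}$-weighted square root were really $\mc O(\eps)$, then multiplying the summed $L_2^-$ equation by $\eps$ and letting $\eps\to 0$ would yield the vacuous identity $0=0$, not the nontrivial relation $\sum_k\rho^{*(0),(k)}\Phi_k=0$ that you go on to solve.

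The paper's proof needs none of this cancellation. It keeps the square-root terms as $\mc O(1)$ quantities $g$, $h$, multiplies the first equation of \eqref{eq:Lax_sys} by $\eps$, and passes to the limit; the $\mc O(1)$ momentum datum $(q^0)^{(k)}$ drops out, and what remains, \eqref{eq:syslim}, is literally the system \eqref{eq:syseps} with $(q^0)^{(k)}=0$. Well-posedness of the limit system is then inherited directly from the $\eps>0$ result of \cite{Colombo2006Well} applied to that special datum, with a branch-by-branch case check for the rarefaction and shock forms. If you want to salvage your route, you would have to prove the $\mc O(\eps^2)$ closeness of the data to the solution as a separate lemma --- it is not a consequence of the subsonic and non-vacuum hypotheses you cite --- at which point the rescaling argument is still the shorter path.
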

\begin{proof}
    Since the second condition of \eqref{eq:Lax_sys} is $\eps$-independent, we only need to focus on the first condition. 
    For brevity, let us first rewrite the Lax-curve definitions in \eqref{eq:Lax} as
    $$
    L_1^+\p{\rho; \widehat{\rho}, \widehat{q}}
    =
    \left\{
        \begin{aligned}
            & \frac{\rho}{\widehat{\rho}} \ \widehat{q} - \frac{1}{\eps}g(\rho,\widehat{\rho}), && \textrm{if } \rho < \widehat{\rho},\\[4pt]
            &\frac{\rho}{\widehat{\rho}} \ \widehat{q} - \frac{1}{\eps}h(\rho,\widehat{\rho}), &&\textrm{if } \rho > \widehat{\rho},
        \end{aligned}\
    \right.
    \quad\textrm{and}\quad 
    L_2^-\p{\rho; \widehat{\rho}, \widehat{q}}
    =
    \left\{
        \begin{aligned}
            & \frac{\rho}{\widehat{\rho}} \ \widehat{q} + \frac{1}{\eps}g(\rho,\widehat{\rho}), && \textrm{if } \rho < \widehat{\rho},\\[4pt]
            &\frac{\rho}{\widehat{\rho}} \ \widehat{q} + \frac{1}{\eps}h(\rho,\widehat{\rho}), &&\textrm{if } \rho > \widehat{\rho},
        \end{aligned}\
    \right.
    $$
    where $g(\rho,\widehat{\rho})$ and $h(\rho,\widehat{\rho})$ are taken such that these are equivalent to \eqref{eq:Lax}.
    In this proof, we will only consider the case in which $\rho > \widehat{\rho}$ for both $L_1^+$ and $L_2^-$, and the proofs for the other three pairings follow analogously. 
    Following this assumption, then the Lax curves for this case read
    $$
    L_1^+\p{\rho; \widehat{\rho}, \widehat{q}} = \frac{\rho}{\widehat{\rho}} \ \widehat{q} - \frac{1}{\eps}h(\rho,\widehat{\rho})
    \qquad \textrm{and} \qquad
    L_2^-\p{\rho; \widehat{\rho}, \widehat{q}} = \frac{\rho}{\widehat{\rho}} \ \widehat{q} + \frac{1}{\eps}h(\rho,\widehat{\rho}). 
    $$
    Note that in the context of the nonlinear system \eqref{eq:Lax_sys}, the case in which $\rho > \widehat{\rho}$ for both $L_1^+$ and $L_2^-$ is equivalent to having $(\rho^0)^{(k)} < (\rho^n)^{(k)}$ and $(\rho^n)^{(k)} < (\rho^*)^{(k)}$. 
    We can then use this specific case of the Lax curves and substitute directly into \eqref{eq:Lax_sys} to obtain the nonlinear system: 
    \begin{equation}\label{eq:syseps}
        \begin{aligned}
            &\sum_{k = 1}^K \frac{(\rho^*)^{(k)}}{(\rho^n)^{(k)}}
            \br{\frac{(\rho^n)^{(k)}}{(\rho^0)^{(k)}} (q^0)^{(k)} -\frac{1}{\eps}h\p{(\rho^n)^{(k)},(\rho^0)^{(k)}} }
            +\frac{1}{\eps}h\p{(\rho^*)^{(k)},(\rho^n)^{(k)}} = 0, \\[4pt]
            &p\p{(\rho^*)^{(k)}}
            = p\p{(\rho^*)^{(\ell)}}
            , \qquad k \neq \ell.
        \end{aligned}
    \end{equation}
    This system with $\eps > 0$ has already been proven to be a well-posed system in \cite{Colombo2006Well}.
    We then multiply the first equation of \eqref{eq:syseps} by $\eps$ and take the limit $\eps \rightarrow 0$ to obtain an equivalent nonlinear system in which we wish to solve for $(\rho^*)^{(k)}$, which reads 
    \begin{equation}\label{eq:syslim}
        \begin{aligned}
            &\sum_{k = 1}^K 
            h\p{(\rho^*)^{(k)},(\rho^n)^{(k)}}
            -\frac{(\rho^*)^{(k)}}{(\rho^n)^{(k)}}
            h\p{(\rho^n)^{(k)},(\rho^0)^{(k)}} = 0, \\[4pt]
            &p\p{(\rho^*)^{(k)}}
            = p\p{(\rho^*)^{(\ell)}}
            , \qquad k \neq \ell.
        \end{aligned} 
    \end{equation}
    This is equivalent to that of \eqref{eq:syseps} with $(q^0)^{(k)}$ being taken as zero. 
    Therefore, this is also already proven as a well-posed problem by the work in \cite{Colombo2006Well}.
    Hence, the system \eqref{eq:Lax_sys} is well-posed in the limiting case $\eps \rightarrow 0$.
\end{proof}

\begin{rmk}\label{rmk3.3}
    As stated in \S\ref{sec2.2}, the other coupling conditions  \eqref{eq:eq_ru} and \eqref{eq:p_loss} reduce to the pressure balance coupling condition in \eqref{eq:eq_p} in the limiting case $\eps \rightarrow 0$.
    Thus, Lemma \ref{lma3.2} is additionally valid for coupling conditions \eqref{eq:eq_ru} and \eqref{eq:p_loss}, as their corresponding nonlinear systems would also reduce to \eqref{eq:syslim} in the low Mach/high friction limit.
\end{rmk}

\begin{rmk}
There exists also other approaches to discretize the coupling conditions. Here, we mention the possibility to do so fully implicit in time~\cite{kolb2010}, using a finite--element  based approaches~\cite{egger2018}, linearization approaches ~\cite{banda2016numer,borsche2014aderschemhigh}, or central scheme type discretizations \cite{herty2023centr}. As the previous Lemma shows the treatment using half-Riemann problems is, however, sufficient to guarantee here the asymptotic preserving property of the scheme. We therefore do not investigate those alternative approaches. 
\end{rmk}

\subsubsection{Additional Boundary Condition Requirements}\label{sec3.5.2}
In addition to defining the ghost cell values for $\rho$ and $\rho u$ at the single point where the pipes meet, the algorithm described in \S\ref{sec3.1}--\S\ref{sec3.3} requires defined values for (i) the reconstructed values on both sides of the cell interfaces $x_{\frac{1}{2}}^{(k)}$ and $x_{N+\frac{1}{2}}^{(k)}$; and (ii) the central difference derivative of numerical fluxes near pipeline boundaries. 

The requirement for reconstructed values at the domain boundaries $x_{\frac{1}{2}}^{(k)}$ and $x_{N+\frac{1}{2}}^{(k)}$ of each pipeline is standard and are directly needed within the numerical flux evaluations; see \eqref{eq:CUflux}.
These evaluations are trivial at the inlets and outlets of the pipe network, as they typically involve Dirichlet or Neumann boundary conditions. 
Again the difficulty lies at the junctions of the network. 
Since we do not have enough information to form a piecewise-linear reconstruction (via equation \eqref{eq:UpmHalf}) of the ghost cells sitting at the junctions, we instead opt for the first-order reconstruction; that is, at pipe intersections, we take
\begin{equation*}
    (\bm U^\pm_{N+\frac{1}{2}})^{(k)} = \bm U^*
    \qquad \textrm{and} \qquad 
    (\bm U^\pm_{\frac{1}{2}})^{(k)} = \bm U^*,
\end{equation*}
for ingoing and outgoing pipelines, respectively. 
Here, $\bm U^*$ denotes the solution to nonlinear system \eqref{eq:Lax_sys}.

The other boundary condition issue lies in the central difference within the fully discrete method to calculate $\xbar \rho^{n+1}_j$. 
As seen in \eqref{eq:discrete_rho}, we require a central difference of $\mc R_j^{\rho u,n}$, which by \eqref{eq:nonstiff} implies the need to evaluate the unknowns $\widetilde{\bg{\mc{F}}}^n_{-\frac{1}{2}}$ and $\widetilde{\bg{\mc{F}}}^n_{N+\frac{3}{2}}$ for each pipe.
This would require even more ghost cell evaluations than the reconstructions on the boundaries.
Thus, we instead assume a zero-extrapolation of $\mc R_j^{\rho u,n}$ for cells $C_0$ and $C_{N+1}$ and use the extrapolated values in the central difference seen in \eqref{eq:discrete_rho}; that is, we use
$$
D_x\br{\frac{\mc R_1^{\rho u,n}}{\Psi_1^n}} = \frac{1}{\dx}\br{\frac{\mc R_2^{\rho u,n}}{\Psi_2^n} - \frac{\mc R_1^{\rho u,n}}{\Psi_0^n}}
\qquad \textrm{and} \qquad 
D_x\br{\frac{\mc R_N^{\rho u,n}}{\Psi_N^n}} = \frac{1}{\dx}\br{\frac{\mc R_N^{\rho u,n}}{\Psi_{N+1}^n} - \frac{\mc R_{N-1}^{\rho u,n}}{\Psi_{N-1}^n}}.
$$
Both this and the first-order reconstructions at pipe intersections imply a first-order method at the junctions, and thus everywhere in the domain. 
In the near future, we intend to expand upon this, the first-order time discretization, and the piecewise-constant initial data within the half-Riemann problem \eqref{eq:half_R} to enhance the proposed scheme to be fully second-order accurate.


\subsection{The Discrete Low Mach/High Friction Limit}\label{sec3.6}
In this section, we show that in the limit as $\eps \rightarrow 0$, the method converges to the asymptotic state in \eqref{eq:A_Behav}. 
Consider the following asymptotic expansions for the discretization of $\rho$, $u$ and $p$, which follow from \S\ref{sec2.2}:
\begin{equation}
    \begin{aligned}
        \xbar\rho_j^n &= \xbar\rho^{(0),n}_j + \eps^2 \xbar\rho^{(2),n}_j + \cdots, \\
        u_j^n &= u^{(0),n}_j + \eps^2 u^{(2),n}_j + \cdots,\\
        p(\xbar\rho_j^n) &= \p{\rho^{(0),n}_j}^\gamma + \eps^2 \gamma \p{\rho^{(0),n}_j }^{\gamma-1}\rho^{(2),n}_j+ \cdots,
    \end{aligned}
    \label{eq:A_discrete}
\end{equation}
and analogously at time $t^{n+1}$. 
We want to show that as the proposed method advances in time, it keeps the asymptotic state up to the predicted order of accuracy.
To this end, we wish to find a bound on the asymptotic expansion for the time-advanced solution
$$
\p{\xbar\rho_j^{(0),n+1}}^\gamma+\frac{C_\delta \kappa}{2}\xbar{\rho}^{(0)n+1}_j u^{(0),n+1}_j.
$$
Since the pressure to friction balance in the asymptotic state arises from the momentum equation in \eqref{eq:euler_eps}, we start with a slightly manipulated form of the discrete approximation for $\p{\xbar{\rho u}}^{n+1}_j$ in \eqref{eq:discrete_ru}:
$$
\frac{\eps^2}{\dt}\p{\p{\xbar{\rho u}}^n_j + \dt \mc{R}^{\rho u, n}_j - \frac{a^n \dt}{\eps^2}D_x\xbar\rho^{n+1}_j-\Psi^n_j\p{\xbar{\rho u}}^{n+1}_j } = 0.
$$
Note that the left hand side is effectively the local truncation error multiplied by the $\mc O(1)$ term $\eps^2\Psi^n_j$.
Here, we first substitute in for $\mc{R}^{\rho u, n}_j$ using equation \eqref{eq:R_ru}, resulting in the form
$$
\frac{\eps^2}{\dt}\p{\p{\xbar{\rho u}}^n_j - \dt D_x\widetilde{F}^{\rho u}(\bm U_j^n) - \frac{a^n \dt}{\eps^2}D_x\xbar\rho^{n+1}_j-\Psi^n_j\p{\xbar{\rho u}}^{n+1}_j }
= \frac{\eps^2}{\dt}\cdot C_1\frac{\dt \dx^2}{\eps^2},
$$
where $C_1$ comes from the leading order of diffusion $\mc D_\jph^{\rho u, n}$ that appears within the $\mc O(\eps^{-2}\dx^2)$ term of \eqref{eq:R_ru}.
Substituting in $\widetilde{F}^{\rho u}(\bm U_j^n)$ from \eqref{eq:splitflux} and $\Psi_j^n$ from \eqref{eq:psi}, we obtain 
$$
\eps^2\br{\frac{\p{\xbar{\rho u}}^n_j-\p{\xbar{\rho u}}^{n+1}_j}{\dt}-D_x\p{\rho u^2}_j^n} - D_x\p{p(\xbar\rho_j^n)-a^n \xbar\rho_j^n} - a^n D_x\xbar\rho^{n+1}_j-\frac{C_\delta \kappa}{2}\p{\xbar{\rho u}}^{n+1}_j
= C_1\dx^2.
$$
We then apply the asymptotic expansions from \eqref{eq:A_discrete} and take $\eps \rightarrow 0$, resulting in 
$$
D_x \br{\p{\xbar\rho_j^{(0),n}}^\gamma}+\frac{C_\delta \kappa}{2}\xbar{\rho}^{(0)n+1}_j u^{(0),n+1}_j= a^n D_x\p{\xbar\rho^{(0),n}_j-\xbar\rho^{(0),n+1}_j} - C_1\dx^2. 
$$
Since $D_x$ represents the second-order central difference operator, then we equivalently have
$$
\br{\p{\xbar\rho_j^{(0),n}}^\gamma}_x+\frac{C_\delta \kappa}{2}\xbar{\rho}^{(0)n+1}_j u^{(0),n+1}_j= a^n \p{\xbar\rho^{(0),n}_j-\xbar\rho^{(0),n+1}_j}_x +(C_2 - C_1)\dx^2,
$$
where $C_2$ represents the the leading error coefficient from using central difference.
Lastly, applying Taylor series to the terms at time $t^n$ about time $t^{n+1}$, we obtain
$$
\br{\p{\xbar\rho_j^{(0),n+1}}^\gamma}_x+\frac{C_\delta \kappa}{2}\xbar{\rho}^{(0)n+1}_j u^{(0),n+1}_j= C_3\dt+(C_2 - C_1)\dx^2.
$$
Here, $C_3$ denotes the leading error term from the Taylor series. 
Therefore, we can bound the asymptotic expansion at time $t^{n+1}$:
\begin{equation*}
\norm{\br{\p{\xbar\rho_j^{(0),n+1}}^\gamma}_x+\frac{C_\delta \kappa}{2}\xbar{\rho}^{(0)n+1}_j u^{(0),n+1}_j} \leq C(\dt+\dx^2),
\end{equation*}
with $C = \max\set{C_3, C_2-C_1}$, 
hence implying that the proposed method preserves the asymptotic state up to the order of the scheme. 

The limiting scheme is accompanied by the (limiting) coupling conditions \eqref{eq:Lax_sys}  at the boundary implemented again using  ghost cells. 
Note again that by the discussion made in \S\ref{sec3.5.2}, the pipe entries and exits occurring at intersections would have a bound that is instead first-order in both space and time. 


\section{Numerical Results}\label{sec4}
In this section, we demonstrate the accuracy and performance of the proposed AP scheme on pipe networks in several numerical experiments conducted on the isentropic Euler equations in \eqref{eq:euler_eps}. 
For all examples, we use an adiabatic index value of $\gamma = 5/3$, follow the CFL condition in \eqref{eq:CFL_ap} and take the CFL number $\nu = 0.45$, in the source term of \eqref{eq:euler_eps} choose $C_\delta = 1$ and $\kappa = 10^{-3}$, take the minmod parameter $\theta = 1.3$, and set the tolerance for the Newton solve at the junctions to be $10^{-8}$.

In this paper, we will only conduct verifications on the two types of T-junctions -- the so-called 1-to-2 T-junction, which has one pipe entering and two pipes leaving the junction, and the so-called 2-to-1 T-junction, which has two pipes entering and one pipe leaving the junction. 
Illustrations of these are presented in Figure \ref{fig:t_junctions}.
While we only consider the example T-junction networks with all pipes having the same length, the method is of course not restricted to this and can be extended to large and more complex pipe networks.
\begin{figure}[ht!]
\centering
\resizebox{.3\textwidth}{!}{%
\begin{circuitikz}
\tikzstyle{every node}=[font=\LARGE]
\node at (9.25,14) [circ] {};
\draw (6.5,14) to[short] (12,14);
\draw (9.25,14) to[short] (9.25,11.25);
\draw [->, >=Stealth] (7.84,14) -- (7.85,14);
\draw [->, >=Stealth] (9.25,12.51) -- (9.25,12.5);
\draw [->, >=Stealth] (10.7,14) -- (10.71,14);
\end{circuitikz}
}%
\hspace{1cm}
\resizebox{.3\textwidth}{!}{%
\begin{circuitikz}
\tikzstyle{every node}=[font=\LARGE]
\node at (9.25,14) [circ] {};
\draw (6.5,14) to[short] (12,14);
\draw (9.25,14) to[short] (9.25,11.25);
\draw [->, >=Stealth] (7.84,14) -- (7.85,14);
\draw [->, >=Stealth] (9.25,12.51) -- (9.25,12.5);
\draw [->, >=Stealth] (10.6,14) -- (10.59,14);
\end{circuitikz}
}%
\caption{\sf Illustrations of the 1-to-2 T-junction (left), and 2-to-1 T-junction (right). }
\label{fig:t_junctions}
\end{figure}
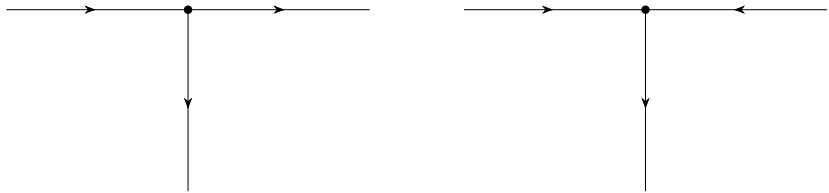

\subsubsection*{Example 1 -- Convergence Tests} 
In the first example, we consider a problem with a smooth density and velocity profile to confirm the convergence rate of the proposed AP method. 
We consider both the 1-to-2 and 2-to-1 T-junction setups. For ingoing pipelines, we take the initial conditions 
$$
u(x,0) = 0,\qquad
\rho(x,0) = \left\{
    \begin{aligned}
        &1.1 && x \leq \frac{0.4}{\eps} \\
        &1+0.1\sin\p{\frac{\pi\eps}{0.8}x}  && \frac{0.4}{\eps} < x < \frac{0.8}{\eps}, \\
        & 1 && x > \frac{0.8}{\eps},
    \end{aligned}\right.
$$
and for outgoing pipes we take the initial conditions $\rho(x,0) = 1$, $u(x,0) = 0$.
We will take each pipe to have a length of $\eps^{-1}$, as we are more interested in the investigating behavior near the junction than the inlet/outlet boundaries.
These pipe lengths allow for the smooth profile to pass through the junction before the final time of $t = 0.2$, but the smooth profile will not reach the inlets/outlets by final time. 
At the inlets, we take a Dirichlet boundary condition of $\rho = 1.1$, and at the outlets we take Neumann boundary conditions. 

Since the exact solution is unknown, we obtain the experimental $L^1$ convergence rates by computing the solution on a number of different meshes, then use the Runge formula 
\begin{equation*}
    {\rm Rate}_\dx(\rho) = \log_2\p{\frac{\|\rho(x,1)_{\dx}-\rho(x,1)_{2\dx}\|_1}{\|\rho(x,1)_{\dx/2}-\rho(x,1)_\dx\|_1}},
\end{equation*}
where $\rho_\dx$ denotes the density solution computed on a uniform mesh with all cells having width $\dx$.
The convergence rates for $u$ can be computed analogously. 
We present the $L^1$ errors and rates for $\rho$ and $u$ for the 1-to-2 junction in Table \ref{t:1to2_conv} and for the 2-to-1 junction in Table \ref{t:2to1_conv}, both of which confirm the expected first order of accuracy when the fluid passes through the junction. 

\begin{table}[ht!]
    \centering
    \footnotesize{
    \begin{tabular}{l c c c c c}
    \hline
        &$\dx$ & $\|\rho(x,1)_{\dx/2}-\rho(x,1)_\dx\|_1$& Rate$_\dx(\rho)$ & $\|u(x,1)_{\dx/2}-u(x,1)_\dx \|_1$& Rate$_\dx(u)$\\ \hline
$\eps = 0.1$&1/10 &	 1.43e-02  &	 --     &	 1.39e-01 &	 --	\\
&1/20 &	 7.72e-03  &	 0.89 &	 7.57e-02 &	 0.88	\\
&1/40 &	 3.89e-03 &    0.99 &	 3.93e-02 &	 0.94 \\
&1/80 &	 1.97e-03 &	 0.98 &	 2.05e-02 &	 0.94 \\
&1/160 &	 9.85e-04 &	 1.00 &	 1.05e-02 &	 0.96 \\[6pt] \hline  
$\eps = 0.01$&1/10 &	 8.59e-02  &	 --     &	 1.20e+01 &	 --	\\
&1/20 &	 3.08e-02  &	 1.48 &	 3.43e+00 &	 1.81	\\
&1/40 &	 9.53e-03 &    1.69 &	 1.08e+00 &	 1.67 \\
&1/80 &	 2.82e-03 &	 1.76 &	 3.08e-01 &	 1.80 \\
&1/160 &	 9.65e-04 &	 1.55 &	 9.94e-02 &	 1.63 \\ [6pt] \hline 
$\eps = 0.001$&1/10 &	 2.89e-02  &	 --     &	 8.21e+00 &	 --	\\
&1/20 &	 9.10e-03  &	 1.67 &	 1.38e+00 &	 2.58 \\
&1/40 &	 3.06e-03 &    1.57 &	 5.28e-01 &	 1.38 \\
&1/80 &	 1.01e-03 &	 1.60 &	 2.22e-01 &	 1.25 \\
&1/160 &	 3.64e-04 &	 1.47 &	 1.04e-01 &	 1.09 
\\[6pt] \hline
    \end{tabular}}
    \caption{\sf Example 1 (1-to-2 T-junction): $L^1$ errors and corresponding experimental convergence rates.}
    \label{t:1to2_conv}
\end{table}

\begin{table}[ht!]
    \centering
    \footnotesize{
    \begin{tabular}{l c c c c c}
    \hline
        &$\dx$ & $\|\rho(x,1)_{\dx/2}-\rho(x,1)_\dx\|_1$& Rate$_\dx(\rho)$ & $\|u(x,1)_{\dx/2}-u(x,1)_\dx \|_1$& Rate$_\dx(u)$\\ \hline
$\eps = 0.1$&1/10 &	 1.49e-02  &	 --     &	 1.41e-01 &	 --	\\
&1/20 &	 6.98e-03  &	 1.10 &	 8.33e-02 &	 0.76	\\
&1/40 &	 3.35e-03 &    1.06 &	 4.42e-02 &	 0.91 \\
&1/80 &	 1.67e-03 &	 1.01 &	 2.27e-02 &	 0.96 \\
&1/160 & 8.42e-04 &	 0.99 &	 1.14e-02 &	 0.99 \\[6pt] \hline  
$\eps = 0.01$&1/10 &	 9.23e-02  &	 --     &	 1.19e+01 &	 --	\\
&1/20 &	 3.84e-02  &	 1.26 &	 3.01e+00 &	 1.98	\\
&1/40 &	 1.19e-02 &    1.69 &	 9.56e-01 &	 1.66 \\
&1/80 &	 2.99e-03 &	 1.99 &	 3.12e-01 &	 1.61 \\
&1/160 & 9.44e-04 &	 1.66 &	 1.21e-01 &	 1.37 \\ [6pt] \hline 
$\eps = 0.001$&1/10 &	 2.93e-02  &	 --     &	 8.16e+00 &	 --	\\
&1/20 &	 9.33e-03  &	 1.65 &	 1.35e+00 &	 2.60 \\
&1/40 &	 3.18e-03 &    1.55 &	 5.12e-01 &	 1.40 \\
&1/80 &	 1.07e-03 &	 1.57 &	 2.11e-01 &	 1.27 \\
&1/160 & 4.02e-04 &	 1.42 &	 9.78e-02 &	 1.11 
\\[6pt] \hline
    \end{tabular}}
    \caption{\sf Example 1 (2-to-1 T-junction): $L^1$ errors and corresponding experimental convergence rates.}
    \label{t:2to1_conv}
\end{table}

\subsubsection*{Example 2 -- Inlet Discontinuity}

For the second example, we look at the evolution of an initial jump discontinuity that starts at the inlets, resulting in a wave that travels through the junction. 
For all pipelines, we consider the initial conditions 
$$
\rho(x,0) = 1,\qquad u(x,0) = 0,
$$
with each pipe having a length of 100. 
At the inlets, we take a Dirichlet boundary condition of $\rho = 1.3$, and prescribe Neumann boundary conditions at the outlets. 
We look at this initial set up for both the 1-to-2 and 2-to-1 T-junction networks. 

We use the proposed AP scheme to compute the solution for $\eps$ values of $0.1$, $0.01$, and $0.001$ to the final times of $t = 10$, $t = 1$, and $t = 0.1$, respectively, to capture the behavior of the discontinuity through the junction. 
The resulting solutions for $\rho$ and $u$ when taking a spatial mesh of 4000 cells in each pipeline are presented in Figure \ref{fig:disc_1to2} for the 1-to-2 T-junction and in Figure \ref{fig:disc_2to1} for the 2-to-1 T-junction. 
In these figures, the influence of the friction source term is very clear, as it appears as if the flow did not yet reach the junction in the $\eps = 0.001$ results. 
This is however not the case and is only a result of the strong friction; see Figure \ref{fig:eps001_outs} where we plot the outgoing pipeline results for $\eps = 0.001$ at $t = 0.1$.
In addition, we see that in the cases that produce discontinuities in the solution, there are no spurious oscillations present.

\begin{figure}[ht!]
    \centering
    \includegraphics[trim={1.5cm 2.0cm 0.5cm 2.0cm},clip,width=0.4\textwidth]{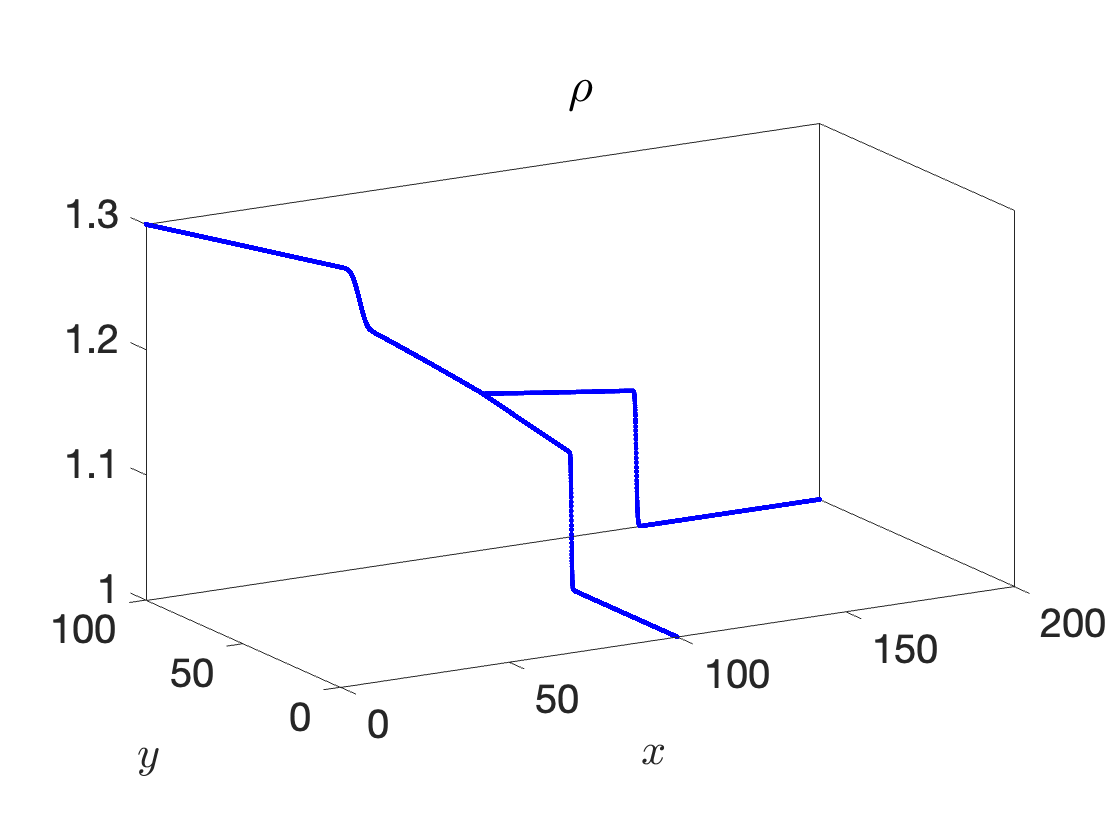}
    \hspace{1cm}
    \includegraphics[trim={1.5cm 2.0cm 0.5cm 2.0cm},clip,width=0.4\textwidth]{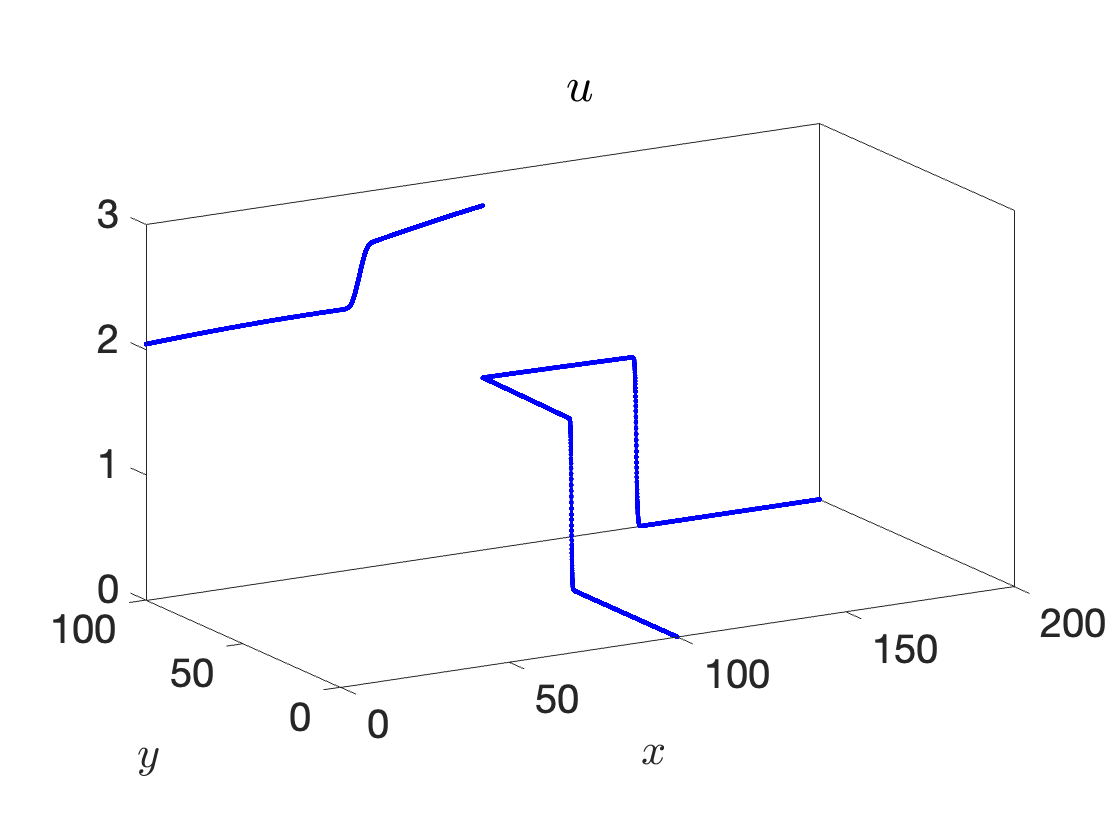}

    \includegraphics[trim={1.5cm 2.0cm 0.5cm 2.0cm},clip,width=0.4\textwidth]{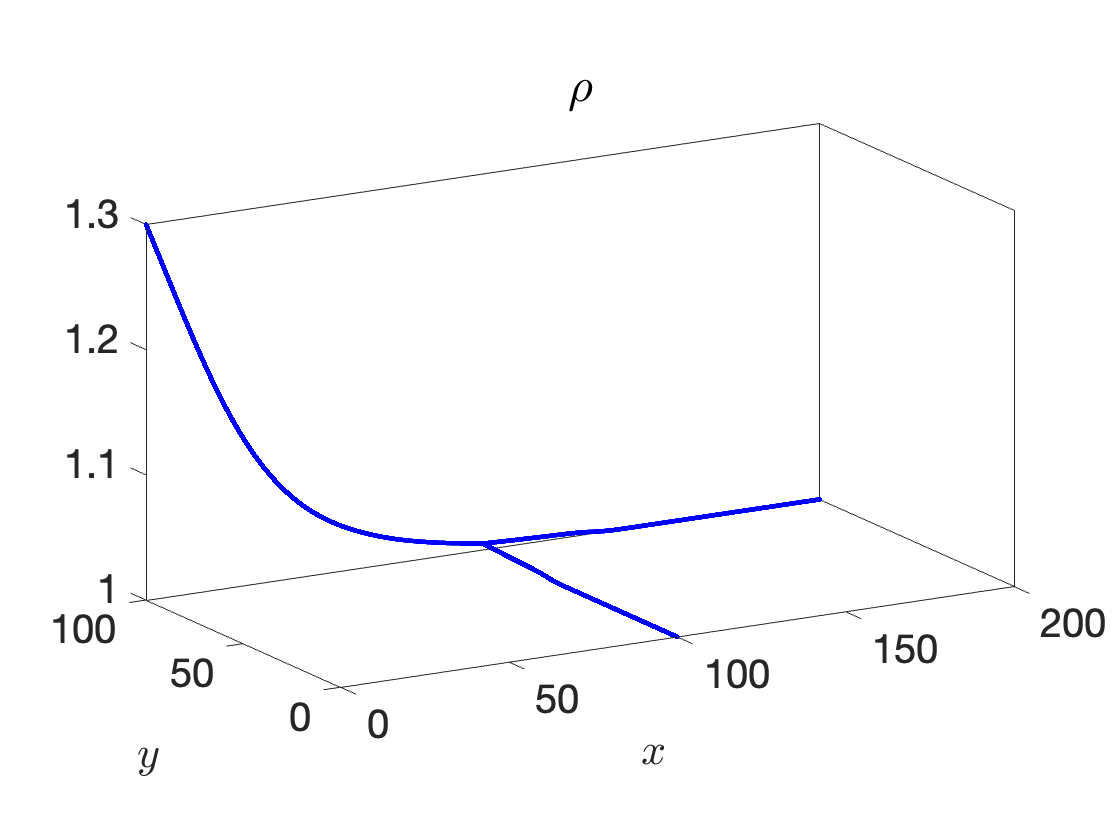}
    \hspace{1cm}
    \includegraphics[trim={1.5cm 2.0cm 0.5cm 2.0cm},clip,width=0.4\textwidth]{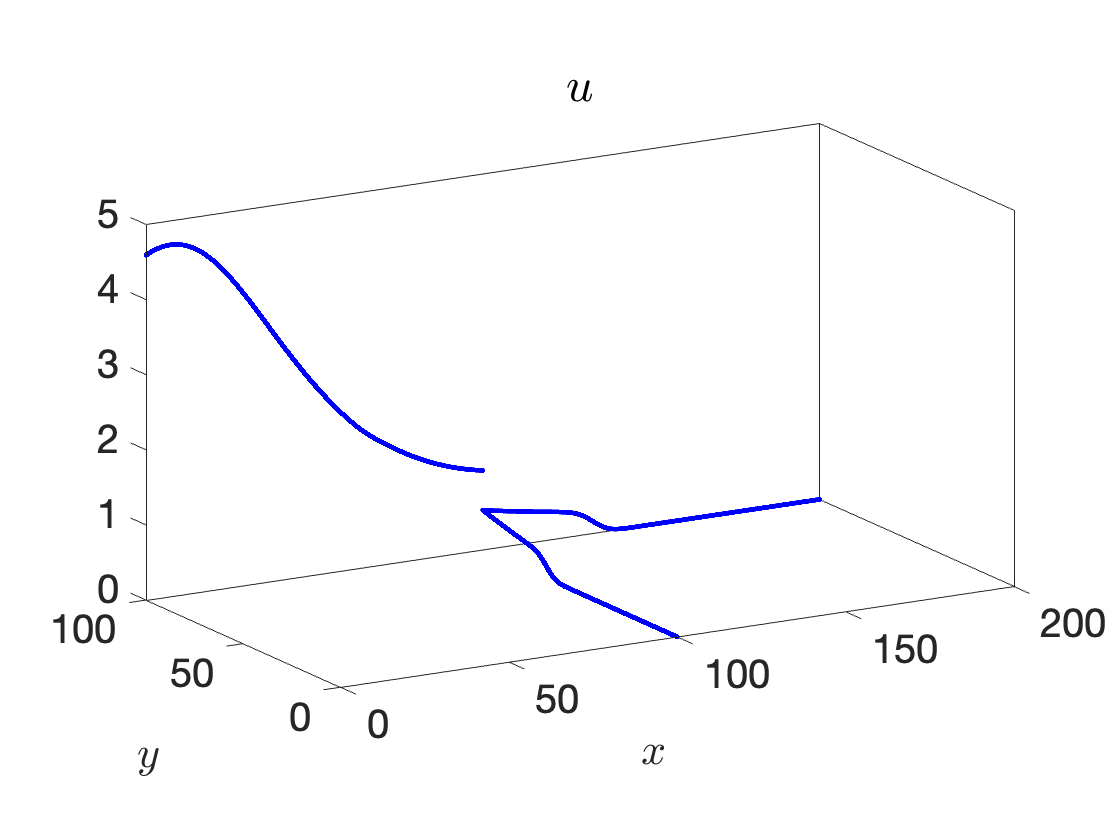}

    \includegraphics[trim={1.5cm 2.0cm 0.5cm 2.0cm},clip,width=0.4\textwidth]{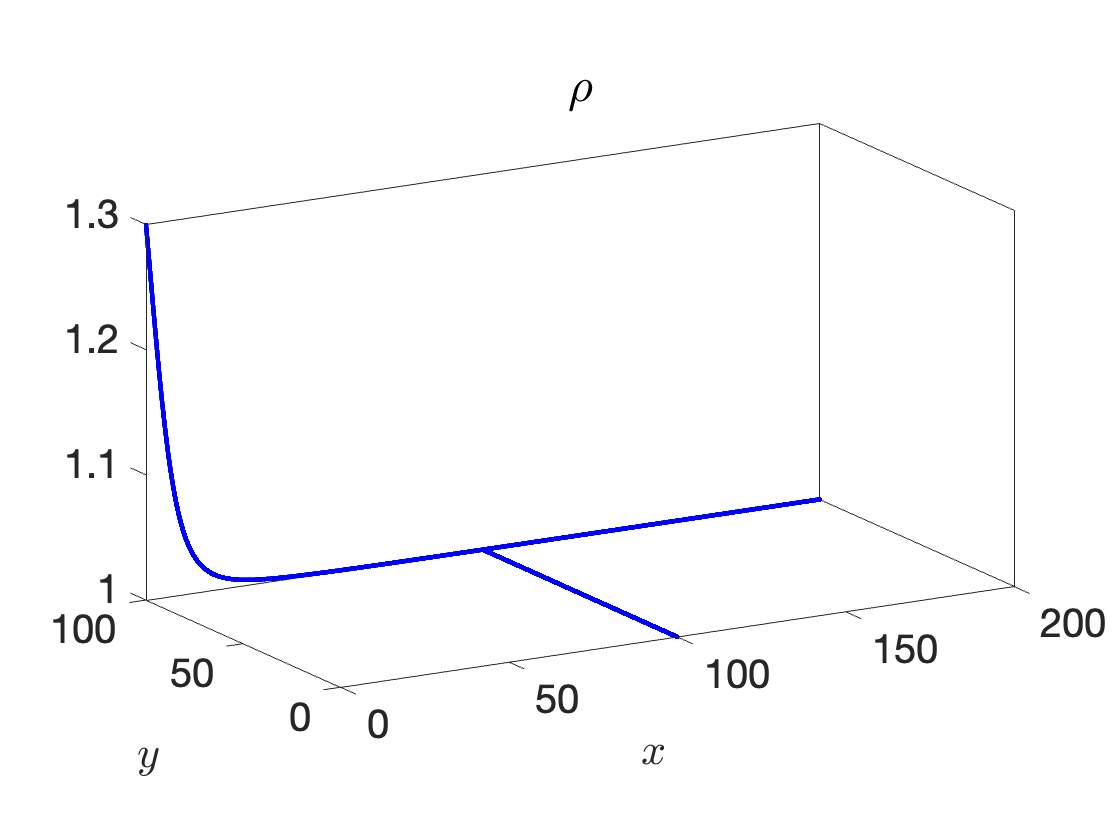}
    \hspace{1cm}
    \includegraphics[trim={1.5cm 2.0cm 0.5cm 2.0cm},clip,width=0.4\textwidth]{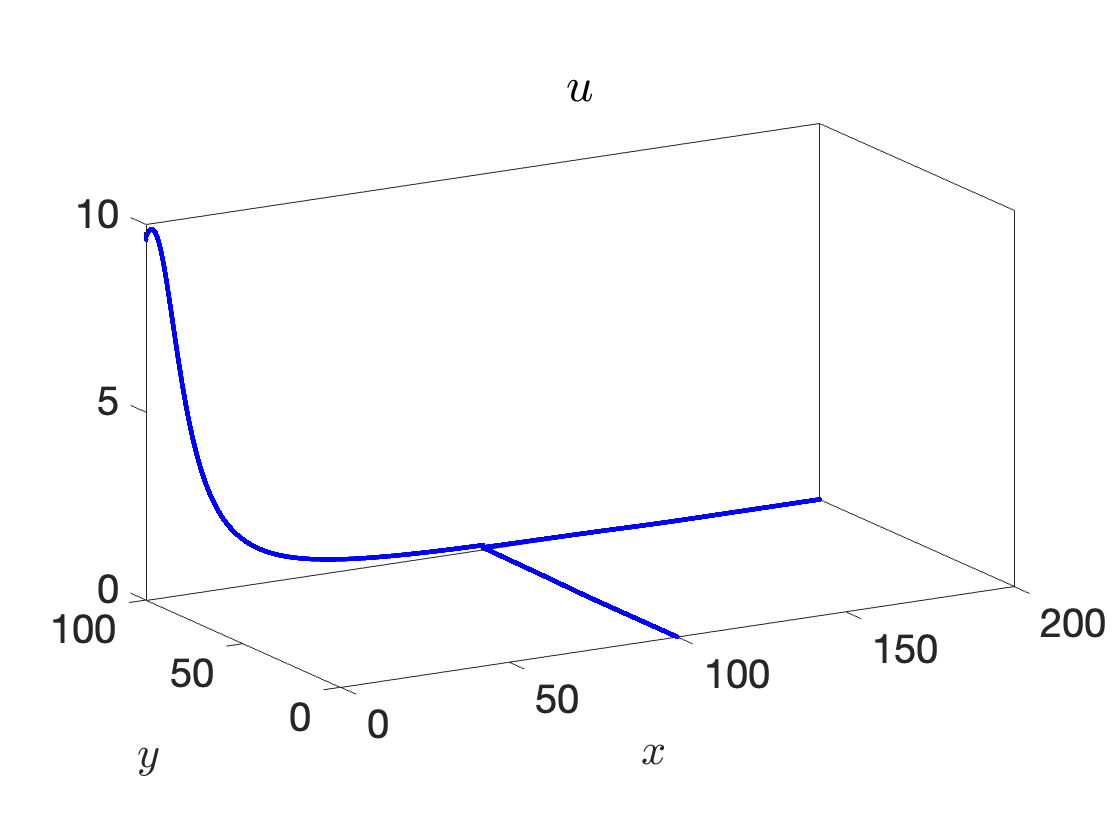}
    \caption{\sf Example 2 (1-to-2 T-junction): Solutions for $\rho$ (left column) and $u$ (right column) for $\eps = 0.1$ at $t = 10$ (top row), $\eps = 0.01$ at $t = 1$ (middle row), and $\eps = 0.001$ at $t = 0.1$ (bottom row).}
    \label{fig:disc_1to2}
\end{figure}

\begin{figure}[ht!]
    \centering
    \includegraphics[trim={1.5cm 2.0cm 0.5cm 2.0cm},clip, width=0.4\textwidth]{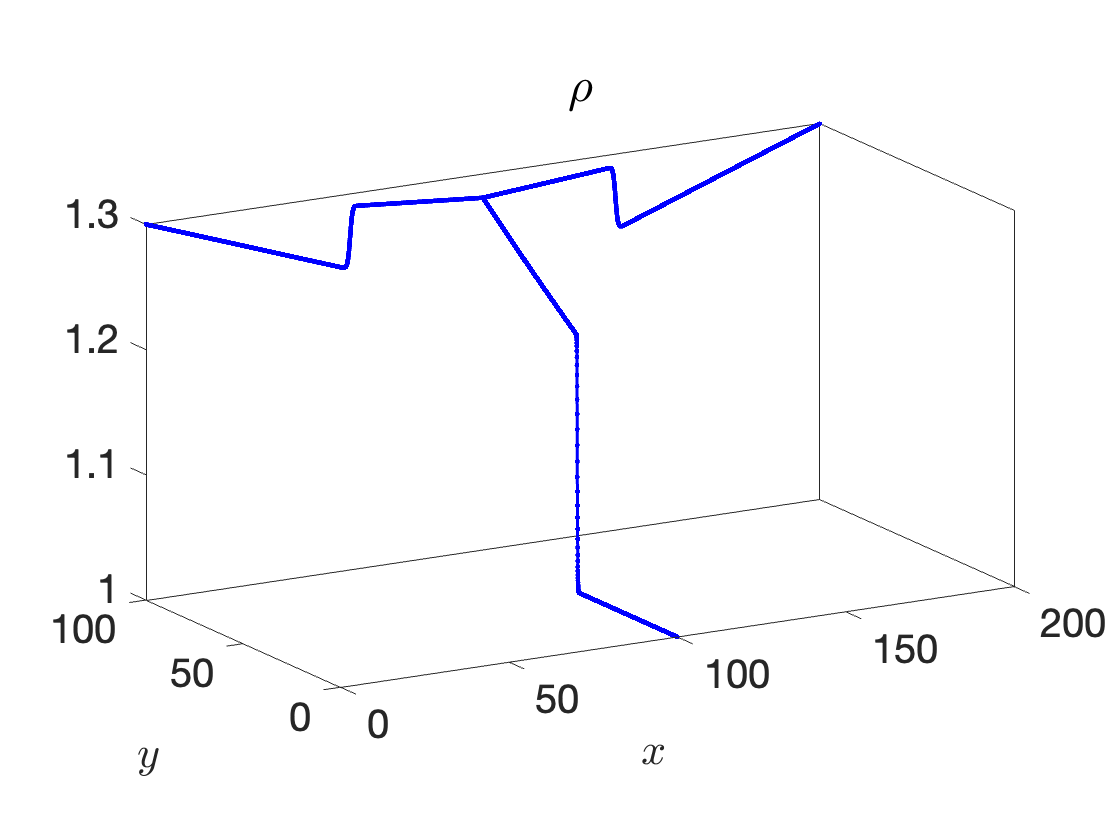}
    \hspace{1cm}
    \includegraphics[trim={1.5cm 2.0cm 0.5cm 2.0cm},clip,width=0.4\textwidth]{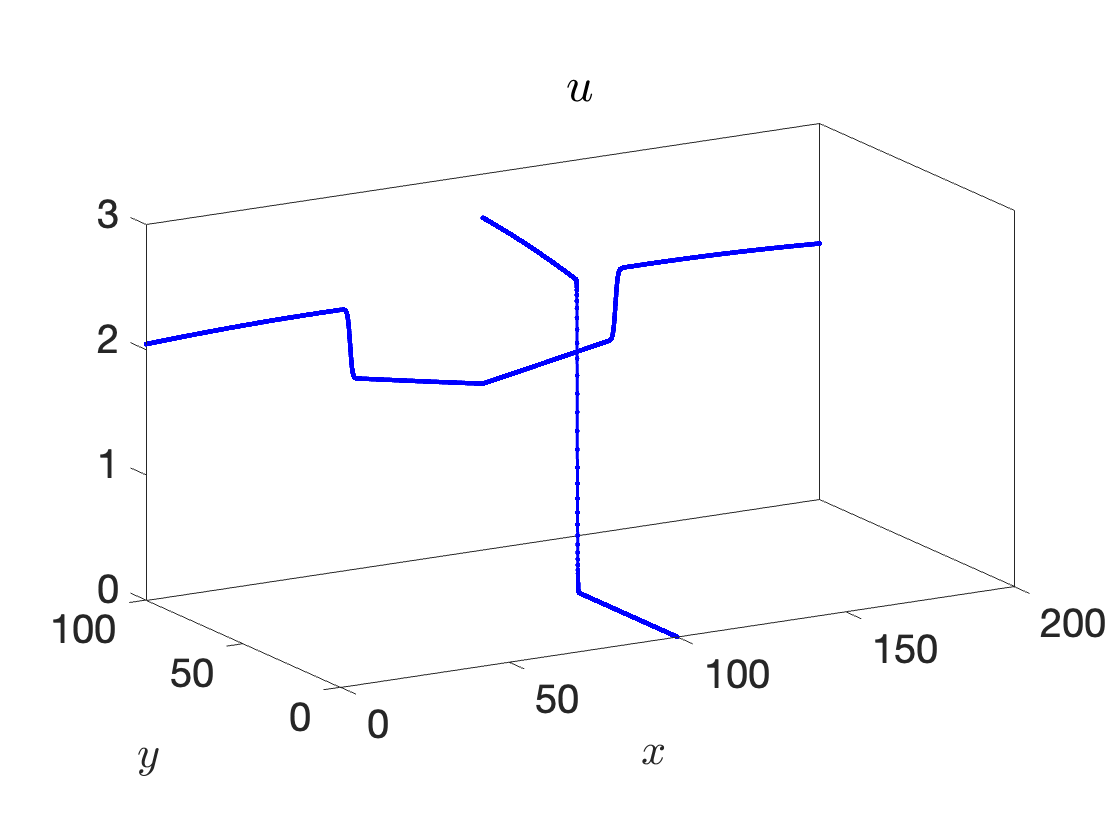}

    \includegraphics[trim={1.5cm 2.0cm 0.5cm 2.0cm},clip,width=0.4\textwidth]{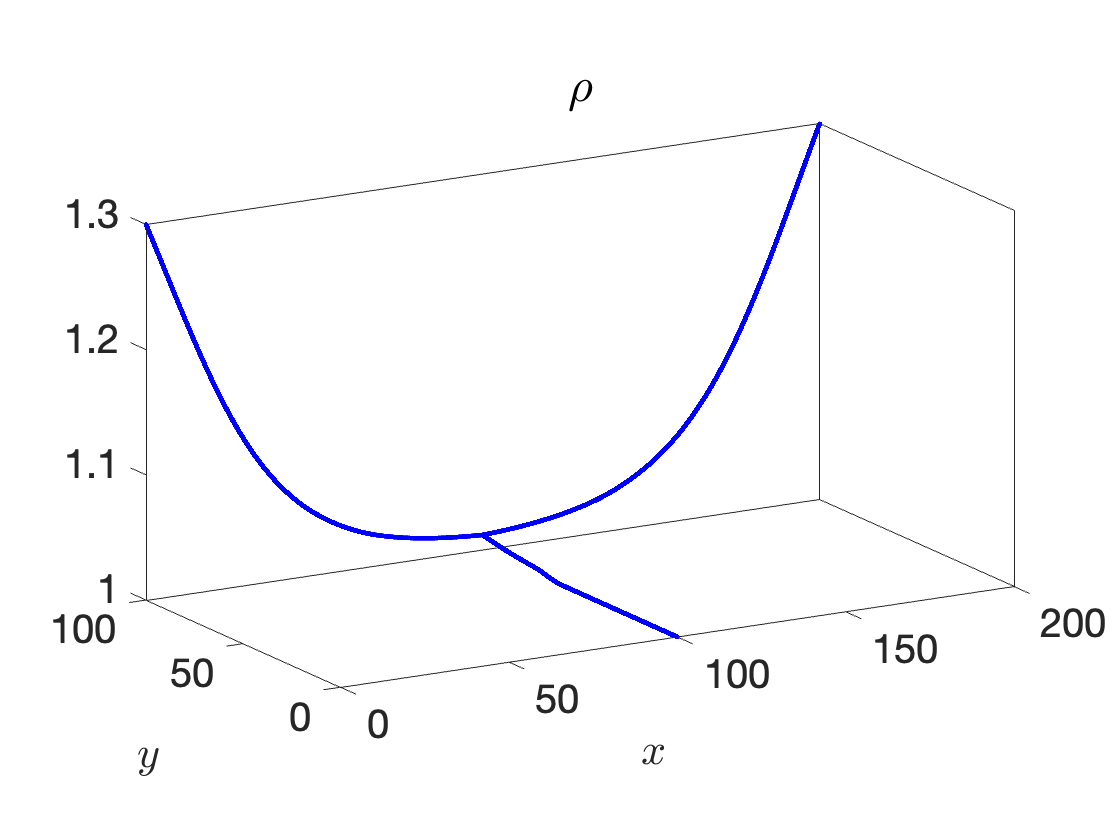}
    \hspace{1cm}
    \includegraphics[trim={1.5cm 2.0cm 0.5cm 2.0cm},clip,width=0.4\textwidth]{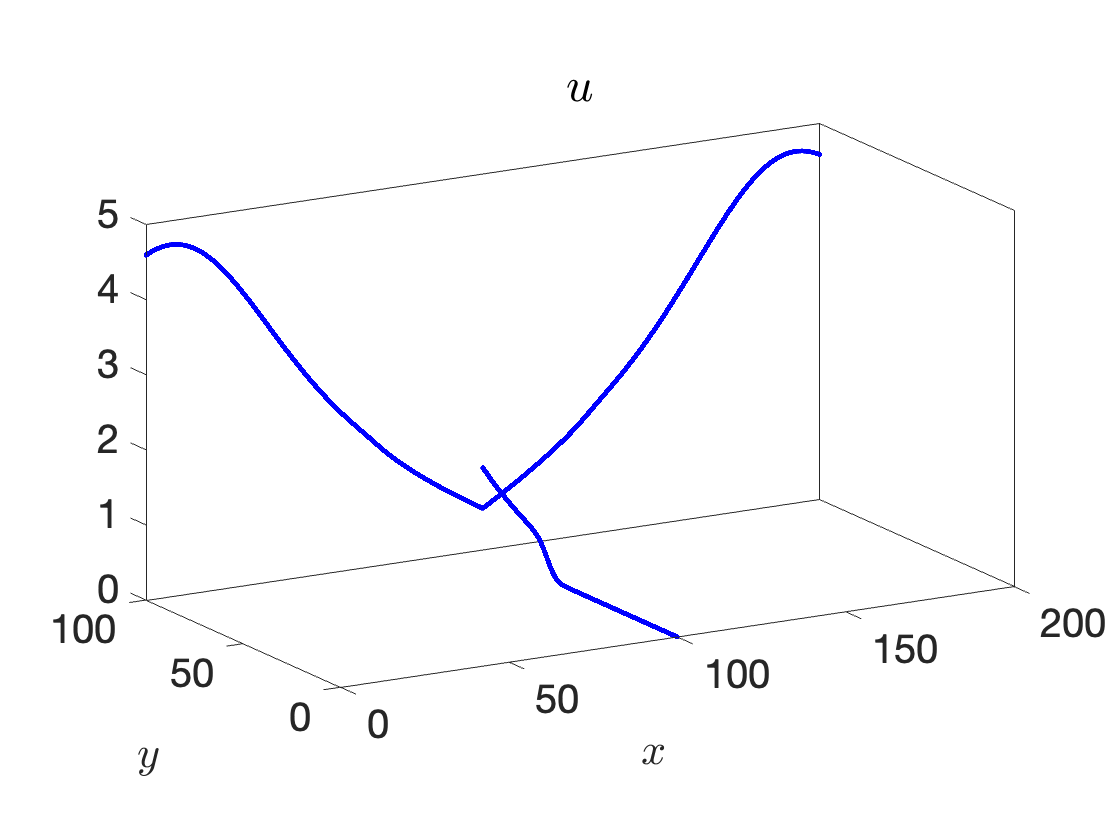}

    \includegraphics[trim={1.5cm 2.0cm 0.5cm 2.0cm},clip,width=0.4\textwidth]{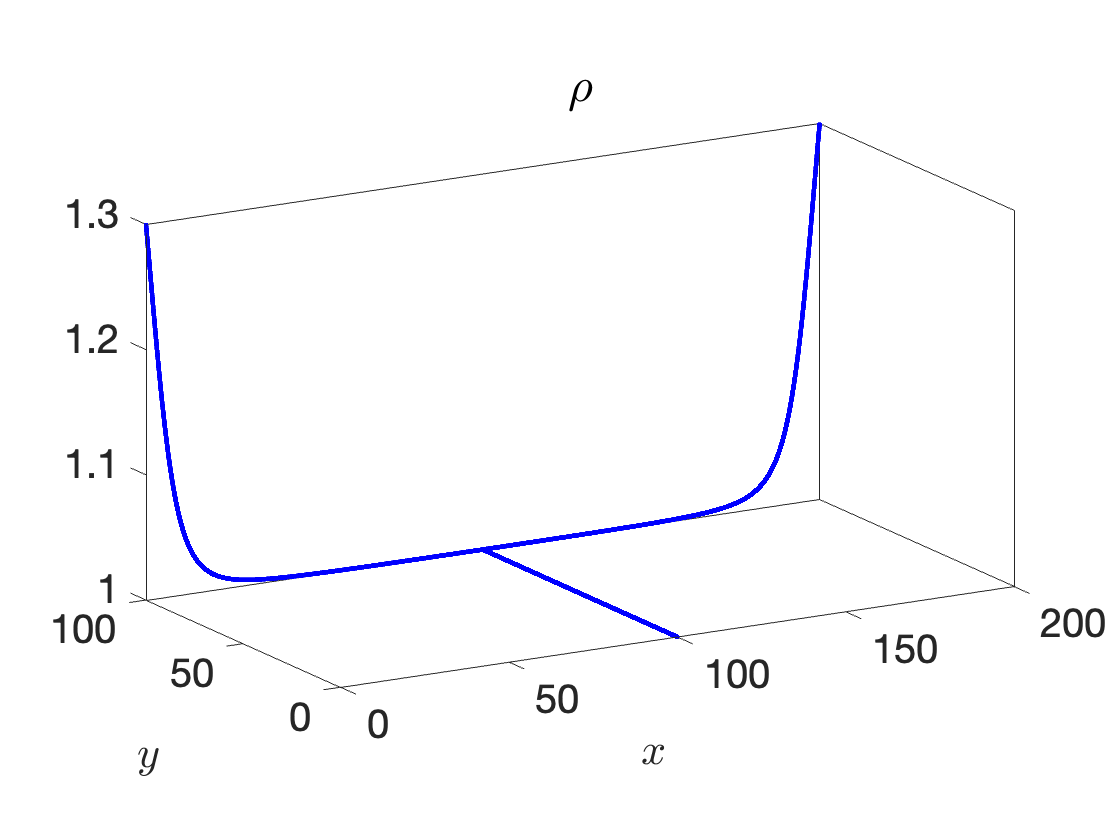}
    \hspace{1cm}
    \includegraphics[trim={1.5cm 2.0cm 0.5cm 2.0cm},clip,width=0.4\textwidth]{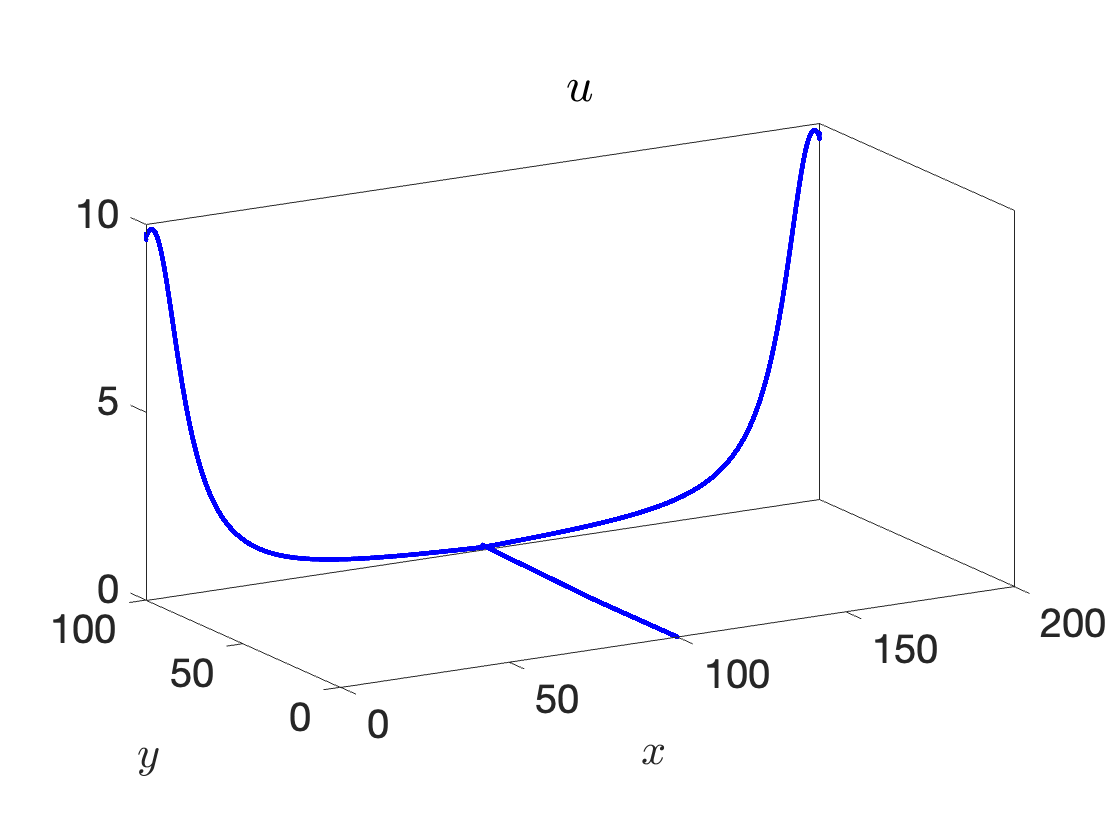}
    \caption{\sf Example 2 (2-to-1 T-junction): Solutions for $\rho$ (left column) and $u$ (right column) for $\eps = 0.1$ at $t = 10$ (top row), $\eps = 0.01$ at $t = 1$ (middle row), and $\eps = 0.001$ at $t = 0.1$ (bottom row).}
    \label{fig:disc_2to1}
\end{figure}

\begin{figure}[ht!]
    \centering
    \includegraphics[width=0.4\textwidth]{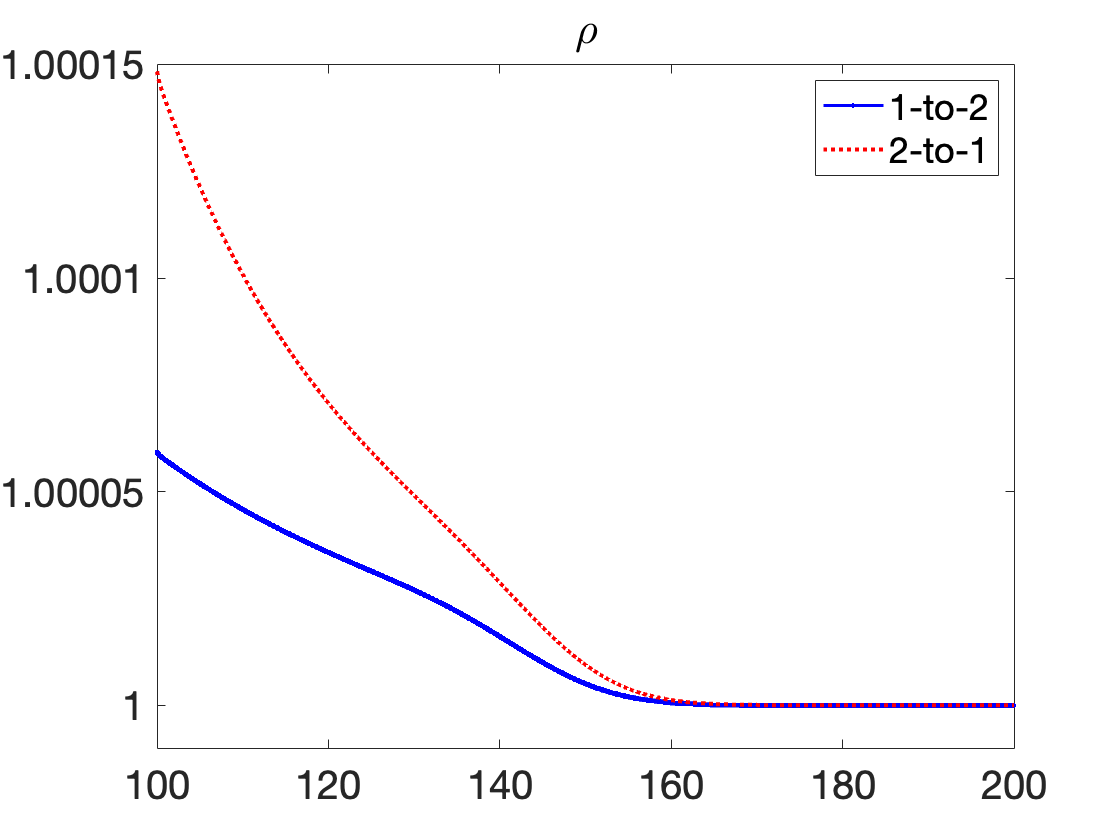}
    \hspace{1cm}
    \includegraphics[width=0.4\textwidth]{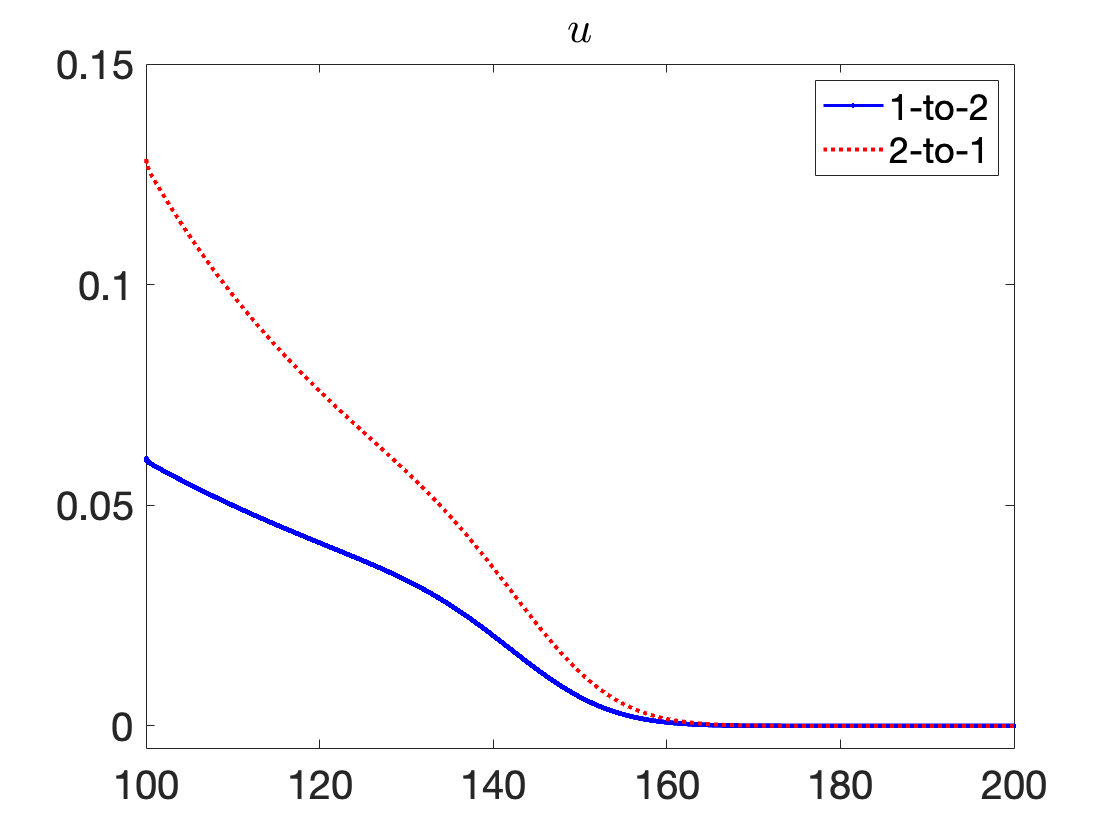}
    \caption{\sf Example 2: Results for $\rho$ (left) and $u$ (right) of the outgoing pipeline(s) for the 1-to-2 T-junction (blue) and the 2-to-1 T-junction (red dotted).}
    \label{fig:eps001_outs}
\end{figure}

\subsubsection*{Example 3 -- Comparison with the CU Explicit Scheme}
In this example, we test the importance of the $\eps$-independent time-step restriction and compare the proposed AP scheme against the CU finite volume discretization with a forward Euler time-step. 
The forward Euler time-step was selected so that the explicit scheme matches the first-order in time, second-order in space accuracy of the proposed AP scheme.

We consider the 1-to-2 T-junction with the same initial and boundary conditions as that of Example 2, and run both the proposed AP scheme and the related explicit scheme to a final time $t = 10$ for the $\eps$ values of 0.1, 0.01, and 0.001 on various spatial discretizations. 
We present the run times of both schemes in Table \ref{t:run_times}. 
In the run times, we see that for $\eps = 0.1$, the run times for the two methods are on the same order of magnitude. 
However, the difference becomes drastic as $\eps \rightarrow 0$.
It is clear in Table \ref{t:run_times} that the AP scheme keeps all run times roughly the same for varying $\eps$.
Conversely, the explicit scheme has the expected $\eps$-dependence within the experimental run times -- which, for $\eps = 0.001$, results in simulations that are a daunting 140$ \times$ slower than that of the AP scheme.

\begin{table}[ht!]
\centering
\footnotesize{
\begin{tabular}{l c c c}
\hline
&$\dx$ & AP Scheme Run Time& Explicit Scheme Run Time \\ \hline
$\eps = 0.1$
&1/20 &	 2.44 s &	 4.05 s\\
&1/40 &	 9.14 s &    16.1 s \\
&1/80 &	 37.4 s &	 65.5 s \\[6pt] \hline  
$\eps = 0.01$
&1/20 &	 2.89 s &	 46.8 s \\
&1/40 &	 11.0 s &    192 s \\
&1/80 &	 44.0 s &	 782 s \\[6pt] \hline 
$\eps = 0.001$
&1/20 &	 2.91 s &	 406 s \\
&1/40 &	 11.0 s &    1610 s \\
&1/80 &	 43.7 s &	 6450 s \\[6pt] \hline
    \end{tabular}}
    \caption{\sf Example 3: Run time comparisons of the proposed AP scheme and CU discretization with forward Euler time-stepping for various $\eps$ and mesh sizes. }
    \label{t:run_times}
\end{table}

\section{Conclusion}\label{sec5}

In this study, we developed a novel asymptotic-preserving method for the isentropic Euler equations with a friction source on pipe networks. 
To address the difficulties and stiffness of the system in low Mach or high friction regimes, we split the flux into pieces that represent the slow and fast dynamics.  This allows for an explicit time-step on the non-stiff (slow) dynamics portion, and, by using a method based on Rosenbrock-type Runge Kutta schemes, allows for an implicit time-step for the stiff (fast) dynamics that does not require a nonlinear solver. 
In turn, we were able to confirm both experimentally and theoretically that the proposed scheme is AP in the sense that it provides a consistent and stable solution in the the low Mach and high friction regimes. 
Most importantly, the CFL stability restriction is only dependent on mesh size, rather than requiring dependence on both the mesh and the small limiting parameter $\eps$ within the system.
This method within each pipeline is then extended to entire pipe networks, in which coupling conditions must be used at pipe-to-pipe intersections to ensure a mathematically well-posed problem. 
We show that, even in the limiting regime, the coupling conditions remain well-posed. 
These coupling conditions are used to set the ghost cell values on each pipeline, thus allowing a seamless coupling of the AP method across pipe junctions within the network. 

Since the AP method is combined with a friction source term and pipe junctions, both the asymptotic state in the low Mach/high friction regime and the boundary conditions are non-trivial. 
Because of this, in addition to a first-order approximation in time, the scheme is currently limited to first-order. 
The second-order extension to the proposed method is, to our knowledge, non-trivial, as it would require, e.g., one-sided reconstructions, adjustments to the half-Riemann problems, and approximating fluxes in the ghost cells. 
In future work, these difficulties are something we plan to address in hopes of making a fully second-order AP scheme for isentropic flow in pipe networks.

\section*{Acknowledgments}

The authors thank the Deutsche Forschungsgemeinschaft (DFG, German Research Foundation) for the financial support through
463312734/FOR5409, 
320021702/GRK2326, and 
within SPP 2410 Hyperbolic Balance Laws in Fluid Mechanics: Complexity, Scales, Randomness (CoScaRa) within the Project(s) HE5386/26-1 (Numerische Verfahren für gekoppelte Mehrskalenprobleme, 525842915) and 
(Zufällige kompressible Euler Gleichungen: Numerik und ihre Analysis, 
525853336) HE5386/27-1. Support received funding from the European Union’s Horizon Europe research and innovation programme under the Marie Sklodowska-Curie Doctoral Network Datahyking (Grant No. 101072546) is acknowledged.

\bibliographystyle{siam}
\bibliography{biblio,references}
\end{document}